\documentclass[10pt]{article}
\usepackage[utf8]{inputenc} 
\usepackage[T1]{fontenc}
\usepackage{lmodern}

\usepackage{amsthm,amsmath,amsfonts,amssymb,stmaryrd,bbm}
\usepackage{mathtools}
\usepackage{enumitem}
\usepackage{mathrsfs} 
\usepackage{mathtools} 

\usepackage{subcaption} 

\usepackage{anyfontsize}

\usepackage{graphicx}
\usepackage[dvipsnames]{xcolor}
\usepackage{caption,tikz}

\usepackage[english]{babel}
\usepackage[colorlinks=true, linkcolor=blue, urlcolor=black, citecolor=blue,pdfstartview=FitH]{hyperref}

\setlength{\textwidth}{7in}
\setlength{\oddsidemargin}{0in}
\setlength{\evensidemargin}{0in}
\setlength{\topmargin}{0in} 
\setlength{\textheight}{8.5in}
\setlength{\footskip}{0.6in}
\setlength{\headsep}{0in}
\setlength{\marginparwidth}{0in}
\setlength{\marginparsep}{0in}
\setlength{\hoffset}{-0.25in}

\numberwithin{equation}{section}

\let\originalleft\left
\let\originalright\right
\renewcommand{\left}{\mathopen{}\mathclose\bgroup\originalleft}
\renewcommand{\right}{\aftergroup\egroup\originalright}

\newlength{\bibitemsep}
\setlength{\bibitemsep}{.2\baselineskip plus .05\baselineskip minus .05\baselineskip}
\newlength{\bibparskip}\setlength{\bibparskip}{0pt}
\let\oldthebibliography\thebibliography
\renewcommand\thebibliography[1]{\oldthebibliography{#1}
  \setlength{\parskip}{\bibitemsep}
  \setlength{\itemsep}{\bibparskip}}


\DeclareMathOperator{\im}{Im}
\DeclareMathOperator{\Tr}{Tr}
\DeclareMathOperator{\Var}{Var}
\DeclareMathOperator{\Cov}{Cov}

\DeclareMathOperator{\OO}{O}

\DeclareMathOperator{\Crt}{Crt}
\DeclareMathOperator{\Id}{Id}
\DeclareMathOperator{\diag}{diag}
\DeclareMathOperator{\supp}{supp}
\DeclareMathOperator{\Spec}{Spec}

\newcommand{\mc}[1]{\mathcal{#1}}
\newcommand{\mf}[1]{\mathfrak{#1}}
\newcommand{\ms}[1]{\mathscr{#1}}

\newcommand{\ii}{\mathrm{i}}

\newcommand{\defeq}{\vcentcolon=}
\newcommand{\eqdef}{=\vcentcolon}

\renewcommand{\epsilon}{\varepsilon}

\renewcommand{\leq}{\leqslant}
\renewcommand{\geq}{\geqslant}

\renewcommand{\P}{\mathbb{P}}
\newcommand{\E}{\mathbb{E}}
\newcommand{\R}{\mathbb{R}}
\newcommand{\C}{\mathbb{C}}
\newcommand{\N}{\mathbb{N}}


\newcommand{\abs}[1]{\left\lvert #1 \right\rvert}

\newcommand{\vertiii}[1]{{\left\vert\kern-0.25ex\left\vert\kern-0.25ex\left\vert #1 
    \right\vert\kern-0.25ex\right\vert\kern-0.25ex\right\vert}}
\newcommand{\ip}[1]{\left\langle #1 \right\rangle}
\newcommand{\diff}{\mathop{}\!\mathrm{d}}

\theoremstyle{plain} 
\newtheorem{thm}{Theorem}[section]

\newtheorem{lem}[thm]{Lemma}
\newtheorem{cor}[thm]{Corollary}

\newtheorem{assn}[thm]{Assumption}

\newtheorem{defn}[thm]{Definition}

\newtheorem{rem}[thm]{Remark}

\newtheorem*{notn}{Notation}

\setlength{\unitlength}{1cm}

\makeatletter
\renewcommand{\section}{\@startsection
{section}
{1}
{0mm}
{-2\baselineskip}
{1\baselineskip}
{\normalfont\large\scshape\centering}} 
\makeatother

\makeatletter
\renewcommand{\subsection}{\@startsection
{subsection}
{2}
{0mm}
{-\baselineskip}
{0 \baselineskip}
{\normalfont\bf\itshape}} 
\makeatother

\makeatletter
\renewcommand{\subsubsection}{\@startsection
{subsubsection}
{3}
{0mm}
{-\baselineskip}
{0 \baselineskip}
{\normalfont\bf\itshape}} 
\makeatother

\def\author#1{\par
    {\centering{\authorfont#1}\par\vspace*{0.05in}}
}

\def\titlefont{\fontsize{13}{15}\bfseries\boldmath\selectfont\centering{}}
\def\authorfont{\fontsize{13}{15}}

\let\affiliationfont\rhfont

\def\address#1{\par
    {\centering{\affiliationfont#1\par}}\par\vspace*{11pt}
}

\def\title#1{
    \thispagestyle{plain}
    \vspace*{-14pt}
    \vskip 79pt
    {\centering{\titlefont #1\par}}%
    \vskip 1em
}



\begin{document}

~\vspace{-1.4cm}

\title{Complexity of Bipartite Spherical Spin Glasses}

\vspace{1cm}
\noindent

\begin{center}
\begin{minipage}[c]{0.33\textwidth}
 \author{Benjamin M\textsuperscript{c}Kenna}
\address{Courant Institute \\ 
   New York University \\
   E-mail: bmckenna@fas.harvard.edu}
 \end{minipage}
 \end{center}

\begin{abstract}
This paper characterizes the annealed complexity of bipartite spherical spin glasses, both pure and mixed. This means we give exact variational formulas for the asymptotics of the expected numbers of critical points and of local minima. This problem was initially considered by Auffinger-Chen \cite{AufChe2014}, who gave upper and lower bounds on this complexity. We find two surprising connections between pure bipartite and pure single-species spin glasses, which were studied by Auffinger-Ben Arous-\v{C}ern\'{y} \cite{AufBenCer2013}. First, the local minima of any pure bipartite model lie primarily in a low-energy band, similar to the single-species case. Second, for a more restricted set of pure $(p,q)$ bipartite models, the complexity matches exactly that of a pure $p+q$ single-species model.
\end{abstract}

\vspace*{0.05in}

\noindent \emph{Date:} March 21, 2023

\vspace*{0.05in}

\noindent \hangindent=0.2in \emph{Keywords and phrases:} Bipartite spin glasses, spherical spin glasses, landscape complexity, Kac--Rice formula, Dyson equation. 

\vspace*{0.05in}

\noindent \emph{2020 Mathematics Subject Classification:} Primary 82B44; secondary 60G15, 60B20.

{
	\hypersetup{linkcolor=black}
	\tableofcontents
}


\newpage
\section{Introduction}


\subsection{History and motivations.}\

Multi-species spin systems were first introduced in the 1970s in the physics of metamagnets \cite{KinCoh1975}, and in the last fifteen years, their development has been accelerated by applications of two kinds. First, in many social and biological networks it is natural to group individuals into two populations, and the result can be modelled with bipartite spin glasses, for example in immunology with two types of immune cells \cite{AglBarBarGalGueMoa2013}. Second, certain types of neural networks, such as Hopfield networks and restricted Boltzmann machines, can be mapped to bipartite spin systems \cite{BarGenGue2010, AglBarGalGueMoa2012, BarGenSolTan2018}.

Partially motivated by these applications, physical properties like the free energy of bipartite spin glasses have been studied, mostly for what we will later call $(1,1)$ models with Ising spins or variations thereof. These were treated both in the physics literature, first by Korenblit--Shender and Fyodorov--Korenblit--Shender \cite{KorShe1985, FyoKorShe1987A, FyoKorShe1987B} and later by Guerra and co-authors \cite{BarGenGue2011, BarGalGuePizTan2014}, both under the assumption of replica symmetry, and then by Hartnett \emph{et al.} assuming replica symmetry breaking \cite{HarParGei2018}; and in the mathematical literature, first as an upper bound due to Barra \emph{et al.} \cite{BarConMinTan2015} and then a matching lower bound due to Panchenko \cite{Pan2015}. The free energy for \emph{spherical} bipartite models was established by Auffinger and Chen at high temperature \cite{AufChe2014}, allowing for mixtures and small external fields, and by Baik and Lee at all temperatures other than some critical one \cite{BaiLee2020}, restricted to what we will call pure spherical $(1,1)$ models. Recently, bipartite spin glasses appeared as a model example in Mourrat's program to relate the free energy of disordered systems to infinite-dimensional Hamilton--Jacobi equations \cite{Mou2021}. 

Beyond applications, bipartite spin systems also serve as a toy model for spin glasses beyond the purely mean-field regime. Spins interact with each other in two groups, a waystation between the best-understood mean-field spin glasses (where all spins interact with each other on equal footing) and the eventual goal of spin glasses with nearest-neighbor interactions.


\subsection{Results.}\

In this paper, we study the \emph{complexity} of high-dimensional bipartite spherical models. That is, write $\mc{H}_N$ for an $N$-dimensional bipartite spin glass, which is a real-valued random function defined on a product of two high-dimensional spheres (see precise definitions in Section \ref{sec:bsg_complexity}). Write $\Crt^{\textup{tot}}_N(t)$ for the (random) number of critical points of $\mc{H}_N$ at which $\mc{H}_N \leq Nt$, and $\Crt^{\textup{min}}_N(t)$ for the number of such local minima. We wish to understand the large-$N$ asymptotics of $\frac{1}{N}\log\E[\Crt^{\textup{tot}}_N(t)]$ and $\frac{1}{N}\log\E[\Crt^{\textup{min}}_N(t)]$. 

This landscape-complexity program --- counting critical points of high-dimensional random functions to understand their geometry --- was initiated by Fyodorov \cite{Fyo2004} for a certain toy model of disordered systems, and re-discovered by Auffinger--Ben Arous--\v{C}ern\'{y} for spherical spin glasses \cite{AufBenCer2013, AufBen2013}. Complexity of spherical bipartite models was first studied by Auffinger and Chen \cite{AufChe2014}, who found continuous functions $J, K : \R \to \R$ such that
\[
    J(t) \leq \lim_{N \to \infty} \frac{1}{N} \log \E[\Crt^{\textup{min}}_N(t)] \leq K(t).
\]
Their strategy was to compare bipartite spin glasses with a coupled pair of usual (single-species) spin glasses. They also established that $J(t) > 0$ for some $t$, so that the system has positive complexity, and that $\lim_{t \to -\infty} K(t) = -\infty$, so that it makes sense to define the ``smallest zero of $K$'' which is thus a lower bound for the ground state.

In Theorem \ref{thm:bsg} below, we give exact formulas for $\lim_{N \to \infty} \frac{1}{N}\log\E[\Crt^{\textup{tot}}_N(t)]$ and $\lim_{N \to \infty} \frac{1}{N}\log\E[\Crt^{\textup{min}}_N(t)]$ that are of the form
\begin{equation}
\label{eqn:introresultform}
    \sup_{u \in \mf{D}} \left\{ \int_\R \log\abs{\lambda} \mu_\infty(u,\lambda) \diff \lambda - \frac{\|u\|^2}{2}\right\}.
\end{equation}
Here $\mf{D}$ is some subset of $\R$ (for pure models) or $\R^3$ (for mixtures), and the deterministic probability measures $\mu_\infty(u,\cdot)$ are found by solving a system of two coupled quadratic equations in two scalar unknowns. This system arises from the \emph{Matrix Dyson Equation (MDE)}, developed to describe the local eigenvalue behavior of large random matrices in \cite{AjaErdKru2019, ErdKruSch2019,AltErdKruNem2019}, which we describe below.

For the special case of pure $(p,q)$ models with ratio $\gamma = \frac{p}{p+q}$ (see definitions below), the measures $\mu_\infty(u,\cdot)$ are rescalings of the semicircle law, so these variational problems can be solved explicitly. The resulting complexity functions turn out to be the same as those describing the pure $p+q$ usual (single-species) spherical spin glass, as established by Auffinger--Ben Arous--\v{C}ern\'{y} \cite{AufBenCer2013}; see Corollary \ref{cor:explicit} below. This is surprising, since the models look quite different. It remains to be seen if this analogy holds for other types of critical points, such as saddle points, for bipartite models with ratios $\gamma$ other than $\frac{p}{p+q}$, or for more than two communities.

We also show that pure $(p,q)$ models, with any ratio $\gamma$, exhibit a band-of-minima phenomenon similar to pure spherical spin glasses. More precisely, there exists a threshold $-E_\infty(p,q,\gamma) < 0$ such that, with high probability and for any $\epsilon > 0$, all local minima have energy values below $N(-E_\infty(p,q,\gamma)+\epsilon)$; see Corollary \ref{cor:einfty} below. It would be interesting to understand the role of this threshold in, say, Langevin dynamics.


\subsection{The (Matrix) Dyson equation.}\

We now introduce the MDE and its role, in both random matrix theory in general, and this paper in particular, as well as commenting on some subtleties in the literature. For a full review we direct the reader to \cite{Erd2019}. In full generality, before any connection to random matrices, one considers the following problem: Given a von Neumann algebra $\ms{A}$ with identity $1$ and tracial state $\ip{\cdot}$, a self-adjoint element $A \in \ms{A}$, some $z$ in the complex upper half-plane, and a linear operator (the ``self-energy operator'') $\mc{S} : \ms{A} \to \ms{A}$ that preserves the cone of positive semidefinite elements of $\ms{A}$, one seeks the unique solution \cite{HelFarSpe2007} $M(z) \in \ms{A}$ to the constrained equation
\begin{equation}
\label{eqn:introvonneumannmde}
\begin{split}
	&1 + (z1 - A + \mc{S}[M(z)])M(z) = 0, \\
	&\text{subject to} \quad \im M(z) = \frac{M(z) - M^\ast(z)}{2\ii} > 0,
\end{split}
\end{equation}
which in this generality is called the \emph{Dyson equation}. If $\mc{S}$ is symmetric with respect to the inner product $\ip{x,y} \defeq \ip{x^\ast y}$, then in fact $\ip{M(z)}$ is the Stieltjes transform of a probability measure $\mu$ on $\R$ \cite{AltErdKru2020}: $\ip{M(z)} = \int \frac{\mu(\diff \lambda)}{\lambda - z}$. We say that the Dyson equation induces the measure $\mu$, or that $\mu$ is the measure coming from the Dyson equation.

The connection to random matrices comes via global laws. Suppose $H_N$ is an $N \times N$ random matrix, say real symmetric, with eigenvalues $\lambda_1(H_N) \leq \cdots \leq \lambda_N(H_N)$ and empirical spectral measure $\hat{\mu}_{H_N} = \frac{1}{N} \sum_{i=1}^N \delta_{\lambda_i(H_N)}$. We ask for a global law, i.e., a deterministic probability measure $\mu_\infty$ on $\R$ such that $\hat{\mu}_{H_N} \to \mu_\infty$ (say weakly in probability), or a little more generally a sequence $(\mu_N)_{N=1}^\infty$ of deterministic probability measures on $\R$ such that $d(\hat{\mu}_{H_N},\mu_N) \to 0$ (say in probability) for some distance $d$ metrizing weak convergence. For the most classical ensembles, global laws are of course well-known: take the semicircle distribution for Wigner matrices, or the Mar\v{c}enko-Pastur distribution for sample covariance matrices, for example.

For non-classical random matrices, however, it is not so obvious what measure to choose -- but one can think of the Dyson equation, for some good choice of $\ms{A}$, $A$, and $\mc{S}$, as a machine producing these measures. For example, suppose $H_N$ is a deformed Wigner matrix with variance profile, meaning $H_N$ is $N \times N$ and has the form $H_N = A_N + W_N$, where $A_N = \E[H_N]$ is deterministic, and $W_N$ has independent entries up to symmetry distributed as $(W_N)_{ij} \sim \sigma_{ij} \nu$ for some fixed probability measure $\nu$, centered with unit variance, and some constants $\sigma_{ij} > 0$. The special case $A_N = 0$ and $\sigma_{ij} \equiv 1/\sqrt{N}$ recovers the usual Wigner matrices. In this generality, one should instead consider, for each $N$, the Dyson equation over the von Neumann algebra $\ms{A} = \C^{N \times N}$, with the choices $A = A_N$ and $\mc{S}[T] = \mc{S}_N[T] = \E[W_NTW_N]$. Under mild conditions on the model, $H_N$ can be shown to satisfy a global law with respect to the sequence of measures $(\mu_N)_{N=1}^\infty$ so produced \cite{AjaErdKru2019}.\footnote{
Actually \cite{AjaErdKru2019}, like many related papers, proves a much harder local law. Roughly, this says that the measure from the Dyson equation describes, not just the collective behavior of all the eigenvalues, but also the behavior of just a few of them. In our work, we do not need the full strength of these types of results.
}

Auffinger and Chen \cite{AufChe2014} already observed that, via the Kac--Rice formula, understanding the complexity of bipartite spherical spin glasses reduces to studying the determinant of a deformed Gaussian matrix with a variance profile. The point of the current work is to connect this observation with our result, joint with Ben Arous and Bourgade in the companion paper \cite{BenBouMcK2022}, that
\begin{equation}
\label{eqn:intro_detcon}
	\lim_{N \to \infty} \left( \frac{1}{N} \log \E[\abs{\det(H_N)}] - \int_\R \log\abs{\lambda} \mu_N(\diff \lambda) \right) = 0
\end{equation}
for such matrices, where $\mu_N$ comes from a Dyson equation, under mild conditions, which we spend a good bit of this article showing are satisfied by most bipartite models. 

One subtle problem in going from \eqref{eqn:intro_detcon} to \eqref{eqn:introresultform} comes in trying to replace $\mu_N$ with some $\mu_\infty$ that has (a) nice properties and (b) as simple of a description as possible. This paper is the third in the series starting with \cite{BenBouMcK2022}; in the second, also joint with Ben Arous and Bourgade \cite{BenBouMcK2021II}, we study the complexity of two models (the ``elastic-manifold'' model and a general signal-plus-noise model) where the measures $\mu_\infty$ are ultimately free additive convolutions of the semicircle law with another probability measure, and thus very well understood. One high-level difference in this third paper is that the measures $\mu_\infty$ for bipartite spin glasses do not seem to admit such classical descriptions: ``found by solving two equations in two unknowns'' is the simplest we can find, and it is not clear if a fundamentally different description exists. To get down to two equations in two unknowns, it is important that multiple Dyson equations, even over different von Neumann algebras, may produce measures that give a good deterministic approximation for the same random matrix. One should weave carefully between these different perspectives to get a clean result. In Section \ref{subsec:goe} we explain how to do this in the toy case of the Gaussian Orthogonal Ensemble (GOE).

Finally, after a draft of this paper was posted on the arXiv, Kivimae \cite{Kiv2021} carried out a second moment computation for pure $(p,q)$ bipartite spin glasses with $\min(p,q) \geq 97$, showing that, at exponential scale, the second moment of the count of critical points in low sublevel sets matches the first squared (i.e., that ``quenched equals annealed''). In particular, this provides an upper bound for the ground state energy, matching the lower bound we give below.

The paper is organized as follows. In Section \ref{sec:bsg_complexity} we state our main results, both variational formulas for general models and closed-form formulas for the special case stated above. In Section \ref{sec:bsg} we give the proofs, which rely on determinant asymptotics for large random matrices as established in the companion paper \cite{BenBouMcK2022}, and strategies for applying these to complexity as established in the companion paper \cite{BenBouMcK2021II}. 


\subsection*{Notations.}\

We write $\|\cdot\|$ for the operator norm on elements of $\C^{N \times N}$ induced by Euclidean distance on $\C^N$, and $\|\cdot\|_{\textup{hs}}$ for the normalized Hilbert-Schmidt norm $\|T\|_{\textup{hs}}^2 = \frac{1}{N} \sum_{i,j} \abs{T_{ij}}^2$. For operators $\mc{S} : \C^{N \times N} \to \C^{N \times N}$, we write $\|\mc{S}\|$ for the operator norm induced by $\|\cdot\|$, and $\|\mc{S}\|_{\textup{hs} \to \|\cdot\|}$ for the norm satisfying $\|\mc{S}[T]\|_{\textup{hs}} \leq \|\mc{S}\|_{\textup{hs} \to \|\cdot\|} \|T\|$. We let
\[
    \|f\|_{\text{Lip}} = \sup_{x \neq y} \abs{\frac{f(x)-f(y)}{x-y}}
\]
for test functions $f : \R \to \R$, and consider the following two distances on probability measures on the real line (called bounded-Lipschitz and Wasserstein-$1$, respectively):
\begin{align*}
    d_{\textup{BL}}(\mu,\nu) = \sup\left\{\abs{\int_\R f \diff (\mu - \nu)} : \|f\|_{\text{Lip}} + \|f\|_{L^\infty} \leq 1\right\}, \quad {\rm W}_1(\mu,\nu) = \sup\left\{\abs{\int_\R f \diff (\mu - \nu)} : \|f\|_{\text{Lip}} \leq 1\right\}.
\end{align*}
We write $\ell(\mu)$ for the left edge (respectively, $r(\mu)$ for the right edge) of a compactly supported measure $\mu$. For an $N \times N$ Hermitian matrix $M$, we write $\lambda_{\min{}}(M) = \lambda_1(M) \leq \cdots \leq \lambda_N(M) = \lambda_{\max{}}(M)$ for its eigenvalues and 
\[
    \hat{\mu}_M = \frac{1}{N}\sum_{i=1}^N \delta_{\lambda_i(M)}
\]
for its empirical measure. We write $\odot$ for the entrywise (i.e., Hadamard) product of matrices. We slightly abuse notation by giving two meanings to $\diag$. First we use the usual one, where $\diag(a_1,\ldots,a_N)$ means the $N \times N$ diagonal matrix with given scalars $a_1, \ldots, a_N$ along the diagonal. Second, we use it for a function from matrices to matrices: if $T$ is a matrix, then $\diag(T)$ is the diagonal matrix of the same size obtained by setting all off-diagonal entries to zero. In equations, we sometimes identify diagonal matrices with vectors of the same size. We write $B_R(0)$ for the ball of radius $R$ about zero in the relevant Euclidean space. We use $(\cdot)^T$ for the matrix transpose, which should be distinguished both from $(\cdot)^\ast$ for the matrix conjugate transpose, and from $\Tr(\cdot)$ for the matrix trace.

Unless stated otherwise, $z$ will always be a complex number in the upper half-plane $\mathbb{H} = \{z \in \C : \im(z) > 0\}$, and we always write its real and imaginary parts as $z = E+\ii \eta$. 


\subsection*{Acknowledgements.}\
We wish to thank Tuca Auffinger for bringing the bipartite spin glass model to our attention, and G{\'e}rard Ben Arous, Paul Bourgade, Wei-Kuo Chen, Krishnan Mody, Jean-Christophe Mourrat, and Ofer Zeitouni for helpful discussions. We are also grateful to the referees for greatly improving the readability of the paper, to Mark Sellke and Brice Huang for help correcting some formulas in a previous version of this paper, and to Yan Fyodorov for pointing us towards additional references on bipartite spin glasses in the physics literature. This work was supported in part by NSF grant DMS-1812114.


\section{Main results}
\label{sec:bsg_complexity}

We follow the notation of \cite{AufChe2014}. If $M \in \N$, write $S^M$ for the $(M-1)$-sphere in $\R^M$ with radius $\sqrt{M}$. Fix some $\gamma \in (0,1)$, suppose that we decompose each positive integer $N \geq 2$ as $N = N_1 + N_2$, where $N_1$ and $N_2$ are positive integers satisfying $N_1 \approx \gamma N$ in the precise sense
\begin{equation}
\label{eqn:ratio_gamma}
    \frac{N_1-1}{N-2} = \gamma.
\end{equation}
(Notice the abuse of notation: $N_1$ is actually a sequence of positive integers.) For any $p, q \geq 1$, define the pure bipartite Hamiltonian for $u = (u_1, \ldots, u_{N_1}) \in S^{N_1}$ and $v = (v_1, \ldots, v_{N_2}) \in S^{N_2}$ as 
\[
    \mc{H}_{N,p,q}(u,v) = \sum_{1 \leq i_1, \ldots, i_p \leq N_1} \sum_{1 \leq j_1, \ldots, j_q \leq N_2} g_{i_1, \ldots, i_p, j_1, \ldots, j_q} u_{i_1} \ldots u_{i_p} v_{j_1} \ldots v_{j_q}
\]
where the $g$ variables are i.i.d. centered Gaussians with variance $N/(N_1^pN_2^q)$. Equivalently, $\mc{H}_{N,p,q}$ is the centered Gaussian process on $S^{N_1} \times S^{N_2}$ with covariance
\[
    \E[\mc{H}_{N,p,q}(u,v)\mc{H}_{N,p,q}(u',v')] = N \left( \frac{1}{N_1} \sum_{i=1}^{N_1} u_iu'_i \right)^p \left( \frac{1}{N_2} \sum_{i=1}^{N_2} v_iv'_i \right)^q.
\]
Notice that this interaction is genuinely bipartite, meaning that it is not a pure spin glass of the concatenated vector $(u_1, \ldots, u_{N_1}, v_1, \ldots, v_{N_2})$. Define the ``mixed'' Hamiltonian
\[
    \mc{H}_N(u,v) = \sum_{p, q \geq 1} \beta_{p,q} \mc{H}_{N,p,q}(u,v)
\]
where the nonnegative double sequence $(\beta_{p,q})_{p,q \geq 1}$ is not identically zero and decays fast enough; for example, $\sum_{p, q \geq 1} (1+\epsilon)^{p+q}\beta_{p,q}^2 < \infty$ for some $\epsilon > 0$ suffices. Define $\xi : [-1,1]^2 \to \R$ by
\[
    \xi(x,y) = \sum_{p,q \geq 1} \beta_{p,q}^2x^py^q
\]
assumed to be normalized as 
\[  
    \xi(1,1) = 1.
\]
We will say the model is ``pure $(p_0,q_0)$'' if $\beta_{p,q} = \delta_{pp_0}\delta_{qq_0}$, and ``pure'' if it is pure $(p_0,q_0)$ for some $p_0$, $q_0$. Define
\begin{align*}
    \xi'_1 &= \partial_x \xi(x,y)|_{x=y=1} = \sum_{p, q \geq 1} p\beta_{p,q}^2, && \xi''_1 = \partial_{xx}\xi(x,y)|_{x=y=1} = \sum_{p, q \geq 1} p(p-1)\beta_{p,q}^2, \\
    \xi'_2 &= \partial_y \xi(x,y)|_{x=y=1} = \sum_{p, q \geq 1} q\beta_{p,q}^2, && \xi''_2 = \partial_{yy}\xi(x,y)|_{x=y=1} = \sum_{p, q \geq 1} q(q-1)\beta_{p,q}^2, \\
    \xi''_{12} &= \partial_x \partial_y \xi(x,y)|_{x=y=1} = \sum_{p,q \geq 1} pq \beta_{p,q}^2.
\end{align*}
Since $\xi(1,1) = 1$, one can check with Cauchy--Schwarz that $\xi''_i + \xi'_i - (\xi'_i)^2 \geq 0$ for each $i = 1, 2$ (the details are carried out in \cite[Lemma 6.1]{BelCerNakSch2022}). Thus we may define 
\[
    \alpha_i = \sqrt{\xi''_i + \xi'_i - (\xi'_i)^2}.
\]
Notice that $\alpha_1 = \alpha_2 = 0$ if and only if the model is pure. To define the remaining quantities, we need the following lemma.
\begin{lem}
\label{lem:sigma_def}
We have
\begin{equation}
\label{eqn:covariance-variance}
    (\xi''_{12} - \xi'_1 \xi'_2)^2 \leq \alpha_1^2\alpha_2^2.
\end{equation}
\end{lem}
\begin{proof}
For any function $f(p,q)$, write $\E_\xi[f] = \sum_{p,q \geq 1} \beta_{p,q}^2 f(p,q)$; since $\xi(1,1) = 1$, this is a normalized expectation. In this notation, \eqref{eqn:covariance-variance} just reads $\Cov_\xi(p,q)^2 \leq \Var_\xi(p)\Var_\xi(q)$.
\end{proof}
Due to Lemma \ref{lem:sigma_def}, it makes sense to define
\begin{equation}
\label{eqn:def_sigma}
\begin{split}
    \sigma &= \begin{cases} \frac{\xi''_{12} - \xi'_1 \xi'_2}{\alpha_1\alpha_2} & \text{if } \alpha_1 \neq 0 \neq \alpha_2, \\ 0 & \text{otherwise,} \end{cases} \\
    \sigma_{\pm} &= \sqrt{1+\sigma} \pm \sqrt{1-\sigma}.
\end{split}
\end{equation}

\subsection{Results.}\

For $t \in \R$, write $\Crt^{\textup{tot}}_N(t)$ for the number of critical points of $\mc{H}_N$ at which $\mc{H}_N \leq Nt$, and $\Crt^{\textup{tot}}_N$ for the total number of critical points of $\mc{H}_N$. Write also $\Crt^{\textup{min}}_N(t)$ for the number of local minima of $\mc{H}_N$ at which $\mc{H}_N \leq Nt$, and $\Crt^{\textup{min}}_N$ for the total number of local minima of $\mc{H}_N$. In the statement of the main theorem, we will need the half-space
\[
	H_t = \{(u_0, u_1, u_2) : u_0 \leq t\} \subset \R^3.
\]

We will mostly restrict ourselves to these objects, but will occasionally consider, for a Borel set $B \subset \R$, the quantities $\Crt^{\textup{tot}}_N(B)$ (respectively, $\Crt^{\textup{min}}_N(B)$) for the number of critical points (respectively, number of local minima) of $\mc{H}_N$ at which $\mc{H}_N \in NB$, and the corresponding space $H_B = B \times \R^2 = \{(u_0,u_1,u_2) \in \R^3 : u_0 \in B\}$. These correspond to our previous notation as, for example, $\Crt^{\textup{min}}_N(t) = \Crt^{\textup{min}}_N((-\infty,t])$ and $H_t = H_{(-\infty,t]}$. 

The following assumption will be made throughout the paper, although we will only sometimes write it explicitly.

\begin{assn}
\label{assn}
Assume that the function $\xi$ satisfies
\begin{equation}
\label{eqn:nondegenerate}
	\xi''_1 > 0 \quad \text{and} \quad \xi''_2 > 0.
\end{equation}
This condition is satisfied if and only if the model is neither a linear combination of $(1,q)$ spins for different $q$ values, nor a linear combination of $(p,1)$ spins for different $p$ values.
\end{assn}

For the statement of the main theorem, Theorem \ref{thm:bsg}, we will need the following family of probability measures.

\begin{lem}
\label{lem:bsg_scalars}
For each $u = (u_0, u_1, u_2) \in \R^3$ and each $z \in \mathbb{H}$, there exists a unique solution $\{m_1(u,z),m_2(u,z)\} \in \C^2$ to the system 
\begin{align}
\label{eqn:bsg_scalars}
    \begin{cases} 1+\left(z-\frac{1}{\gamma}(\frac{\alpha_1\sigma_+}{2}u_1 + \frac{\alpha_1\sigma_-}{2}u_2 - \xi'_1u_0) + \frac{\xi''_1}{\gamma} m_1(u,z) + \frac{\xi''_{12}}{\gamma} m_2(u,z)\right) m_1(u,z) = 0, \\
    1+\left(z-\frac{1}{1-\gamma}(\frac{\alpha_2\sigma_-}{2}u_1 + \frac{\alpha_2\sigma_+}{2} u_2 - \xi'_2u_0) + \frac{\xi''_2}{1-\gamma} m_2(u,z) + \frac{\xi''_{12}}{1-\gamma} m_1(u,z)\right) m_2(u,z) = 0, \\
    \im(m_1(u,z)) > 0, \\
    \im(m_2(u,z)) > 0.\end{cases}
\end{align}
For each $u$, this unique solution has the property that $m(u,z) \defeq \gamma m_1(u,z) + (1-\gamma)m_2(u,z)$ is the Stieltjes transform of some probability measure $\mu_\infty(u)$ on $\R$, which is compactly supported with a bounded, H\"olderian density $\mu_\infty(u,\cdot)$.
\end{lem}

\begin{lem}
\label{lem:scontinuous}
For each $u \in \R^3$, define
\begin{equation}
\label{eqn:s_bsg}
	\mc{S}_{\textup{bsg}}[u] = \int_\R \log\abs{\lambda} \mu_\infty(u,\lambda) \diff \lambda - \frac{\|u\|_2^2}{2}.
\end{equation}
Then $\mc{S}_{\textup{bsg}}[u]$ is a continuous function of $u$ with $\lim_{\|u\| \to \infty} \mc{S}_{\textup{bsg}}[u] = -\infty$.
\end{lem}

\begin{thm}
\label{thm:bsg}
Suppose Assumption \ref{assn} is satisfied. Then
\begin{align}
\label{eqn:bsg_tot}
\begin{split}
    \Sigma^{\textup{tot}}(t) \defeq \lim_{N \to \infty} \frac{1}{N}\log \E[\Crt_N^{\textup{tot}}(t)] &= \frac{1+\gamma\log\left(\frac{\gamma}{\xi'_1}\right) + (1-\gamma)\log\left(\frac{1-\gamma}{\xi'_2}\right)}{2} + \sup_{u\in H_t} \mc{S}_{\textup{bsg}}[u], \\
    \Sigma^{\textup{tot}} \defeq \lim_{N \to \infty} \frac{1}{N}\log \E[\Crt_N^{\textup{tot}}] &= \frac{1+\gamma\log\left(\frac{\gamma}{\xi'_1}\right) + (1-\gamma)\log\left(\frac{1-\gamma}{\xi'_2}\right)}{2} + \sup_{u \in \R^3} \mc{S}_{\textup{bsg}}[u],
\end{split}
\end{align}
and (due to Lemma \ref{lem:scontinuous}) these suprema are achieved, possibly not uniquely.

Furthermore, define the set 
\begin{equation}
\label{eqn:bsg_defG}
    \mc{G} = \{u \in \R^3 : \mu_\infty(u)((-\infty,0)) = 0\}
\end{equation}
of $u$ values whose corresponding measures $\mu_\infty(u)$ are supported in the right half-line. Then $\mc{G}$ is convex and closed, and we have
\begin{align}
\label{eqn:bsg_min}
\begin{split}
    \Sigma^{\textup{min}}(t) &\defeq \lim_{N \to \infty} \frac{1}{N}\log \E[\Crt_N^{\textup{min}}(t)] = \frac{1+\gamma\log\left(\frac{\gamma}{\xi'_1}\right) + (1-\gamma)\log\left(\frac{1-\gamma}{\xi'_2}\right)}{2} + \sup_{u\in H_t \cap \mc{G}} \mc{S}_{\textup{bsg}}[u], \\
    \Sigma^{\textup{min}} &\defeq \lim_{N \to \infty} \frac{1}{N}\log \E[\Crt_N^{\textup{min}}] = \frac{1+\gamma\log\left(\frac{\gamma}{\xi'_1}\right) + (1-\gamma)\log\left(\frac{1-\gamma}{\xi'_2}\right)}{2} + \sup_{u \in \mc{G}} \mc{S}_{\textup{bsg}}[u],
\end{split}
\end{align}
and these again suprema are achieved, possibly not uniquely ($H_t \cap \mc{G}$ is nonempty for every $t$).
\end{thm}

\begin{rem}
\label{rem:pure}
In the special case when the model is pure $(p,q)$, the result simplifies somewhat: The $\alpha_i$ vanish in \eqref{eqn:bsg_scalars}, and with them all dependence on $u_1$ and $u_2$, so $\mu_\infty((u_0,u_1,u_2))$ is a function of $u_0$ only. Thus $\mc{G}$ takes the form 
\[
    \mc{G} = \{u_0 \times \R^2 : u_0 \in \mc{G}_{\textup{pure}}\}
\]
for some set $\mc{G}_{\textup{pure}} = \mc{G}_{\textup{pure}}(p,q,\gamma) \subset \R$. Since $\mc{G}$ is convex and closed, and \eqref{eqn:subsetofg} below shows that it contains points whose first coordinates are arbitrarily large and negative, in fact $\mc{G}_{\textup{pure}}$ must be an interval of the form
\begin{equation}
\label{eqn:gpure}
    \mc{G}_{\textup{pure}} = (-\infty,-E_\infty(p,q,\gamma)]
\end{equation}
for some $E_\infty(p,q,\gamma)$, which will turn out to be an important threshold. (This notation and sign convention is intended to evoke \cite{AufBenCer2013}; see the discussion below.)

One consequence of this simplification is that the variational problems for pure $(p,q)$ models are one-dimensional:
\begin{align*}
    \lim_{N \to \infty} \frac{1}{N}\log \E[\Crt_N^{\textup{tot}}(t)] &= \frac{1+\gamma\log\left(\frac{\gamma}{p}\right) + (1-\gamma)\log\left(\frac{1-\gamma}{q}\right)}{2} + \max_{u_0 \leq t} \mc{S}_{\textup{bsg}}[(u_0,0,0)], \\
    \lim_{N \to \infty} \frac{1}{N}\log \E[\Crt_N^{\textup{tot}}] &= \frac{1+\gamma\log\left(\frac{\gamma}{p}\right) + (1-\gamma)\log\left(\frac{1-\gamma}{q}\right)}{2} + \max_{u_0 \in \R} \mc{S}_{\textup{bsg}}[(u_0,0,0)],
\end{align*}
and similarly for minima.
\end{rem}

\begin{cor}
\label{cor:einfty}
For every pure $(p,q)$ model satisfying Assumption \ref{assn}, the quantity $-E_\infty(p,q,\gamma)$ defined in \eqref{eqn:gpure} is strictly negative, and most local minima have energy below $-NE_\infty(p,q,\gamma)$ in the following senses:
\begin{itemize}
\item For all $t \geq -E_\infty(p,q,\gamma)$, we have $\Sigma^{\textup{min}}(t) = \Sigma^{\textup{min}}(-E_\infty(p,q,\gamma))$.
\item For any $\epsilon > 0$, we have
\[
    \lim_{N \to \infty} \frac{1}{N}\log \P(\Crt^{\textup{min}}_N((-E_\infty(p,q,\gamma)+\epsilon,\infty)) \geq 1) = -\infty.
\]
\end{itemize}
\end{cor}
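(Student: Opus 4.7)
\emph{First, the strict inequality $-E_\infty(p,q,\gamma) < 0$.} My plan is to show that $\mu_\infty((0,0,0))$ is symmetric about the origin, which immediately places $0 \notin \mc{G}_{\textup{pure}}$. Specializing \eqref{eqn:bsg_scalars} to $u_0 = u_1 = u_2 = 0$ (with $\alpha_1 = \alpha_2 = 0$ forced in the pure case), I would check that the involution
$$(m_1, m_2)(z) \;\mapsto\; (g_1, g_2)(z) := \bigl(-\overline{m_1(-\bar z)},\; -\overline{m_2(-\bar z)}\bigr)$$
sends solutions of the MDE to solutions: positivity of the imaginary parts is preserved since $\im(g_i(z)) = \im(m_i(-\bar z))$, and conjugating the MDE at $z' = -\bar z$ shows that $(g_1, g_2)$ solves the same system at $z$. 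The uniqueness part of Lemma \ref{lem:bsg_rhoinftyisgood} then forces $m_i(z) = -\overline{m_i(-\bar z)}$, an identity that passes to the averaged Stieltjes transform $m = \gamma m_1 + (1-\gamma) m_2$. This is precisely the statement that $\mu_\infty((0,0,0))$ is symmetric about $0$. Since this measure has a bounded density (Remark \ref{rem:bsgrhoinfty}) and is a probability measure, it puts mass $1/2$ on $(-\infty, 0)$. Hence $(0,0,0) \notin \mc{G}$, so $0 \notin \mc{G}_{\textup{pure}}$, and \eqref{eqn:gpure} yields $-E_\infty(p,q,\gamma) < 0$.

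\emph{Second, the constancy $\Sigma^{\textup{min}}(t) = \Sigma^{\textup{min}}(-E_\infty)$ for $t \geq -E_\infty$.} By Remark \ref{rem:pure} the supremum in \eqref{eqn:bsg_min} reduces to a one-dimensional problem over $u_0 \leq t$ with $u_0 \in \mc{G}_{\textup{pure}}$ (the $-\|u\|^2/2$ penalty drives $u_1, u_2$ to zero at the maximizer). Once $t \geq -E_\infty$, the constraint $u_0 \leq t$ becomes inactive because $\mc{G}_{\textup{pure}} = (-\infty, -E_\infty]$, so the sup is over the same set regardless of $t$, giving the desired constancy.

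\emph{Third, the super-exponential decay above the threshold.} By Markov's inequality it suffices to prove
$$\lim_{N \to \infty} \frac{1}{N} \log \E\bigl[\Crt^{\textup{min}}_N((-E_\infty + \epsilon, \infty))\bigr] = -\infty.$$
I would adapt the Kac--Rice argument underlying Theorem \ref{thm:bsg}, replacing $\1_{H_t}$ with $\1_{(-E_\infty + \epsilon, \infty)}$ in the energy variable. For any $u_0 > -E_\infty + \epsilon$ we have $u_0 \notin \mc{G}_{\textup{pure}}$, so $\mu_\infty((u_0, 0, 0))$ puts nontrivial mass on $(-\infty, 0)$. Positive-definiteness of the (bipartite) Hessian is then a Ben Arous--Dembo--Guionnet-type large-deviation event of probability $e^{-\Theta(N^2)}$, as established in the companion papers \cite{BenBouMcK2021I, BenBouMcK2021II}. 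This super-exponential suppression dominates every other exponential factor from the Kac--Rice density, and combined with the Gaussian-type decay from the $-u_0^2/2$ term at large $|u_0|$ yields the claimed limit of $-\infty$. The main obstacle is making the BDG-type estimate quantitative and \emph{uniform} in $u_0$ across the unbounded range $(-E_\infty + \epsilon, \infty)$ so that the truncation-and-Laplace argument closes; I expect this to reduce to extracting the right uniform bound from the companion papers, though the packaging will require care.
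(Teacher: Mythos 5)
Your first two parts are essentially identical to the paper's: the symmetry $\mu_\infty(-u,\lambda) = \mu_\infty(u,-\lambda)$ (which the paper reads directly off of \eqref{eqn:bsg_scalars} rather than via the Stieltjes-transform involution, but the content is the same) gives $0 \notin \mc{G}_{\textup{pure}}$, and the constancy of $\Sigma^{\textup{min}}(t)$ for $t \geq -E_\infty$ is the observation that the constraint $u_0 \leq t$ is inactive once $t \geq -E_\infty$. No issues there.

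For the third part your high-level plan (Markov, Kac--Rice over the half-space $\widetilde{H}_\epsilon = \{u_0 \geq -E_\infty + \epsilon\}$, and super-exponential suppression of $\P(H_N(u) \geq 0)$ for $u \notin \mc{G}$) is the correct skeleton, but you have flagged the real obstacle yourself — uniformity of the suppression over the unbounded $u_0$-range — and you have not actually resolved it. The paper does not re-derive a pointwise BDG-type bound and then worry about uniformity. Instead it uses the machinery already packaged in the companion paper: the nested sets $\mc{G}_{-\delta} = \{u : \mu_\infty(u)((-\infty,-\delta)) \leq \delta\}$ from \cite[(4.5)]{BenBouMcK2021I} and \cite[Lemma 4.7]{BenBouMcK2021I}, which states that the Kac--Rice integral restricted to $(\mc{G}_{-\delta})^c$ has $\frac{1}{N}\log(\cdot) \to -\infty$ for every fixed $\delta>0$, uniformity already built in. To use this you must exhibit a single $\delta > 0$ with $\widetilde{H}_\epsilon \subset (\mc{G}_{-\delta})^c$, which requires knowing that in the pure case $\mc{G}_{-\delta}$ projects to an interval $\mc{G}_{\textup{pure},-\delta} = (-\infty, f(\delta)]$ with $f(\delta) \downarrow -E_\infty$ as $\delta \downarrow 0$. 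That interval structure is a genuinely new fact proved in the paper by a Courant--Fischer argument: since $B_N = A_N(1,0,0)$ is negative definite, each eigenvalue of $W_N + u_0 B_N$ is non-increasing in $u_0$, so $\mu_\infty(u_0)((-\infty,-\delta))$ is non-decreasing in $u_0$ and $\mc{G}_{\textup{pure},-\delta}$ is a closed half-line. This step — which is what actually makes the argument close uniformly over the unbounded region — is missing from your proposal, so as written the third part has a real gap even granting the companion-paper inputs.
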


In the extra-special case of a pure $(p,q)$ model with $\gamma = \frac{p}{p+q}$, we can solve the variational problems explicitly, because then the relevant Hessian is (almost, up to small error) a generalized Wigner matrix and $\mu_\infty(u)$ is (exactly) a rescaled semicircle law. In the following we write the log-potential of semicircle as 
\begin{align*}
    \Omega(x) &= \int_{-2}^2 \log\abs{\lambda - x} \frac{\sqrt{4-\lambda^2}}{2\pi} \diff \lambda = \begin{cases} \frac{x^2}{4} - \frac{1}{2} & \text{if } \abs{x} \leq 2, \\ \frac{x^2}{4} - \frac{1}{2} - \left(\frac{\abs{x}}{4} \sqrt{x^2-4} - \log \left(\frac{\abs{x} + \sqrt{x^2-4}}{2}\right) \right) & \text{if } \abs{x} \geq 2. \end{cases}
\end{align*}

\begin{cor}
\label{cor:explicit}
For a pure $(p,q)$ model satisfying Assumption \ref{assn} with $\gamma = \frac{p}{p+q}$, we have
\[
    E_\infty\left(p,q,\frac{p}{p+q}\right) = 2\sqrt{\frac{p+q-1}{p+q}}, 
\]
and
\begin{align}
    \Sigma_{p+q}(t) \defeq \lim_{N \to \infty} \frac{1}{N}\log \E[\Crt_N^{\textup{tot}}(t)] &= \begin{cases} \frac{1+\log(p+q-1)}{2} + \Omega\left(t\sqrt{\frac{p+q}{p+q-1}}\right) - \frac{t^2}{2} & \text{if } t \leq 0, \\ \frac{\log(p+q-1)}{2} & \text{if } t \geq 0, \end{cases} \label{eqn:bsg_defsigmapq} \\
    \Sigma_{p+q} \defeq \lim_{N \to \infty} \frac{1}{N}\log \E[\Crt_N^{\textup{tot}}] &= \frac{\log(p+q-1)}{2} \notag, \\
    \Sigma_{p+q, \textup{min}}(t) \defeq \lim_{N \to \infty} \frac{1}{N}\log \E[\Crt_N^{\textup{min}}(t)] &= \Sigma_{p,q}\left(\min\left(t,-E_\infty\left(p,q,\frac{p}{p+q}\right)\right)\right) \notag \\
    \Sigma_{p+q,\textup{min}} \defeq \lim_{N \to \infty} \frac{1}{N}\log \E[\Crt_N^{\textup{min}}] &= \Sigma_{p,q}\left(-E_\infty\left(p,q,\frac{p}{p+q}\right)\right) = \frac{\log(p+q-1)}{2} + \frac{2}{p+q} - 1 \notag.
\end{align}
Notice the surprising fact that, as implicit in the notation, these functions depend only on $p+q$ rather than on $p$ and $q$ individually.
\end{cor}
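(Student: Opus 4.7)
The plan is to specialize the MDE system \eqref{eqn:bsg_scalars} to the symmetric setting $\gamma = p/(p+q)$, observe that it collapses to a single semicircle self-consistent equation, and then explicitly solve the resulting one-dimensional variational problem from Remark \ref{rem:pure}.

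First I would specialize: in the pure $(p,q)$ case $\alpha_1 = \alpha_2 = 0$, $\xi'_1 = p$, $\xi'_2 = q$, $\xi''_1 = p(p-1)$, $\xi''_2 = q(q-1)$, so by Remark \ref{rem:pure} the measure $\mu_\infty(u)$ depends only on $u_0$. Inserting $\gamma = p/(p+q)$ symmetrizes the coefficients: $\xi''_1/\gamma = (p-1)(p+q)$, $\xi''_2/(1-\gamma) = (q-1)(p+q)$, $\xi'_1\xi'_2/\gamma = q(p+q)$, $\xi'_1\xi'_2/(1-\gamma) = p(p+q)$, and $\xi'_1 u_0/\gamma = \xi'_2 u_0/(1-\gamma) = (p+q)u_0$, so under the ansatz $m_1 = m_2 = m$ both lines of \eqref{eqn:bsg_scalars} coincide and reduce to the single Wigner self-consistent equation
\[
    1 + \bigl(z + (p+q)u_0 + (p+q)(p+q-1)\, m\bigr)\, m = 0
\]
of variance $\sigma^2 := (p+q)(p+q-1)$, at the shifted argument $z + (p+q)u_0$. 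Invoking the uniqueness of solutions in $\mathbb{H}^2$ asserted in Lemma \ref{lem:bsg_rhoinftyisgood} — the only genuinely nontrivial step — forces $m_1 = m_2$, so $\mu_\infty(u_0)$ is the semicircle of radius $2\sigma$ translated by $-(p+q)u_0$, and $E_\infty(p,q,p/(p+q)) = 2\sigma/(p+q) = 2\sqrt{(p+q-1)/(p+q)}$ follows immediately by reading off when the support lies in $[0,\infty)$.

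Next I would compute the log-potential by changing variables $\lambda = -(p+q)u_0 + \sigma y$ to reduce to standard semicircle:
\[
    \int_\R \log|\lambda|\, \mu_\infty(u_0, d\lambda) = \log \sigma + \Omega\!\left(u_0\sqrt{\tfrac{p+q}{p+q-1}}\right).
\]
Combined with the prefactor of Theorem \ref{thm:bsg}, which simplifies to $(1 - \log(p+q))/2$ using $\gamma/p = (1-\gamma)/q = 1/(p+q)$, and with $\log \sigma = \tfrac12 \log((p+q)(p+q-1))$, this gives
\[
    \tfrac{1+\gamma\log(\gamma/p) + (1-\gamma)\log((1-\gamma)/q)}{2} + \mc{S}_{\textup{bsg}}[(u_0,0,0)] = \tfrac{1+\log(p+q-1)}{2} + \Omega\!\left(u_0\sqrt{\tfrac{p+q}{p+q-1}}\right) - \tfrac{u_0^2}{2}.
\]

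Finally I would optimize the function $h(u_0) := \Omega(u_0 c) - u_0^2/2$, with $c = \sqrt{(p+q)/(p+q-1)}$, over $u_0 \leq t$ (total) or $u_0 \leq \min(t, -E_\infty)$ (minima). Direct differentiation via the piecewise formula for $\Omega$ shows that $h$ is even and nonincreasing in $|u_0|$: on $|u_0| \leq E_\infty$ it equals $-\tfrac{p+q-2}{4(p+q-1)}u_0^2 - \tfrac12$, while on $|u_0| > E_\infty$ one has $h'(u_0) = u_0(c^2/2 - 1) - \tfrac{c}{2}\sqrt{u_0^2 c^2 - 4}$, of sign opposite to $u_0$. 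The supremum is therefore achieved at $u_0 = \min(t,0)$ in the total case and at $u_0 = \min(t, -E_\infty)$ in the minima case; substituting and using $\Omega(0) = -1/2$, $\Omega(-2) = 1/2$, and $E_\infty^2/2 = 2(p+q-1)/(p+q)$ yields the four displayed formulas, e.g.\ $\Sigma_{p+q,\textup{min}} = \log(p+q-1)/2 + 2/(p+q) - 1$.
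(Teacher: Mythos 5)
Your proposal is correct and takes essentially the same route as the paper: impose the ansatz $m_1 = m_2$ (forced by the uniqueness established in Lemma \ref{lem:bsg_rhoinftyisgood}), recognize the resulting measure as a shifted and rescaled semicircle, change variables to rewrite the log-potential in terms of $\Omega$, and then solve the resulting one-dimensional variational problem using the evenness and strict concavity of $u_0 \mapsto \mc{S}_{\textup{bsg}}[(u_0,0,0)]$. You spell out the details of the paper's brief sketch (in particular the derivative formula for $\Omega$ outside $[-2,2]$, where your displayed $h'$ is written for $u_0 > 0$ and should be extended to $u_0<0$ by oddness), but the structure and every key computation agree.
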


The functions $\Sigma_{p+q}(t)$ are strictly increasing on $(-\infty,0)$, and $\Sigma_{p+q}(0) > 0$, so they each have a unique zero. They are plotted for $p+q = 4, 5, 6$ in Figure \ref{figure:sigmap's}. Notice that $p+q = 4$ (corresponding to a pure $(2,2)$ bipartite spin glass) is the smallest value to which Theorem \ref{thm:bsg} applies. As a corollary, we obtain a lower bound on the ground state of $\mc{H}_{N,p,q}$ in the classical way.

\begin{figure}
\begin{center}
\includegraphics[scale=0.6]{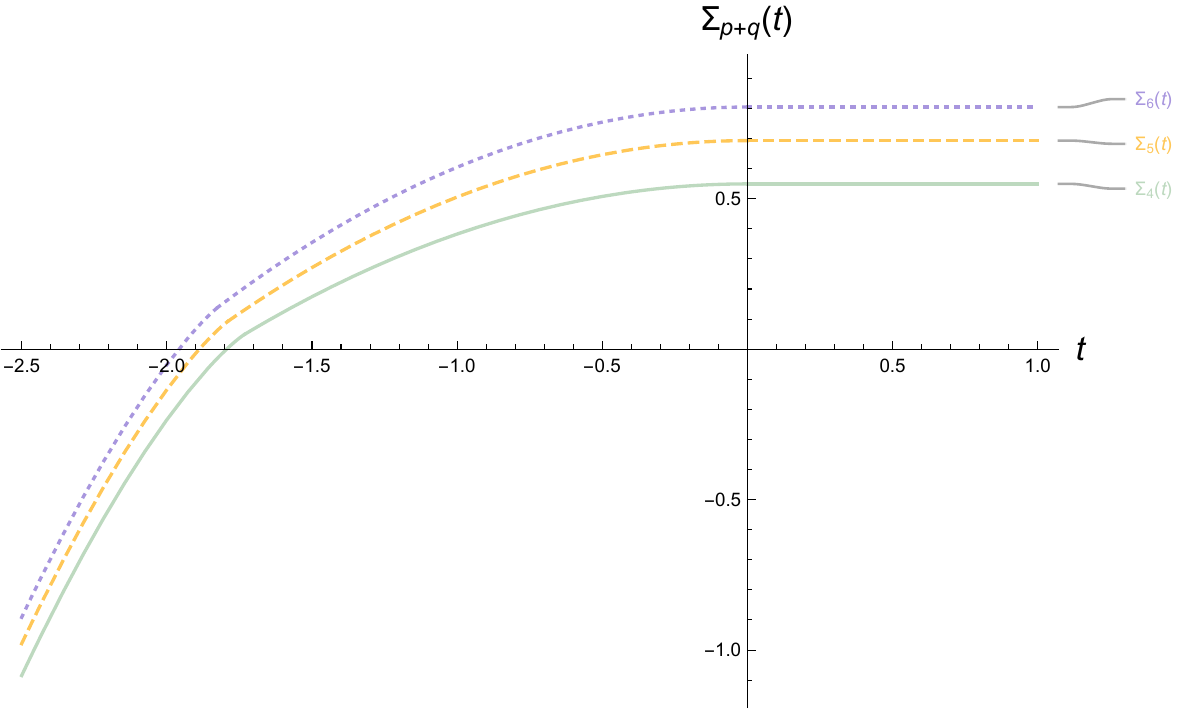}
\end{center}
\caption{Plots of $\Sigma_{p+q}(t)$, which captures the asymptotic complexity of total critical points with field values in $(-\infty,Nt)$ of the pure $(p,q)$ model at $\gamma = \frac{p}{p+q}$, for $p+q = 4, 5, 6$ (solid green, dashed yellow, dotted purple, respectively). Negative values of $\Sigma_{p+q}(t)$ are irrelevant for us, since we can prove that the zero of $\Sigma_{p+q}$ is a lower bound for the ground state (and we believe it is equal to the ground state). The functions stabilize at $t = 0$: this is consistent with distributional symmetry $\mc{H}_{N,p,q} \overset{d}{=} - \mc{H}_{N,p,q}$, since we would expect the total number of critical points to be twice the number of critical points with values in $(-\infty,0)$ on average.}
\label{figure:sigmap's}
\end{figure}

\begin{cor}
\label{cor:groundstate}
Let $-E_0(p+q)$ be the unique zero of the function $\Sigma_{p+q}$ defined in \eqref{eqn:bsg_defsigmapq}, and consider the Hamiltonian $\mc{H}_{N,p,q}$ of a pure $(p,q)$ model with $\gamma = \frac{p}{p+q}$. For any $\epsilon > 0$ there exist $C_1, C_2 > 0$ such that
\[
    \P\left(\min_{u,v} \mc{H}_{N,p,q}(u,v) \leq N(-E_0(p+q)-\epsilon)\right) \leq C_1\exp(-C_2N).
\]
Furthermore, 
\[
    \lim_{p+q \to \infty} \frac{E_0(p+q)}{\sqrt{\log(p+q)}} = 1.
\]
\end{cor}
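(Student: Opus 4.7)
The corollary has three claims: a concentration bound on the ground state, numerical values for small $p+q$, and the asymptotic $E_0(p+q) \sim \sqrt{\log(p+q)}$. The concentration is a routine first-moment argument. Since $\min_{u,v} \mc{H}_{N,p,q}(u,v) \leq Nt$ forces the existence of a critical point at sublevel $Nt$, Markov gives
\[
    \P\pa{\min_{u,v} \mc{H}_{N,p,q}(u,v) \leq Nt} \leq \P(\Crt_N^{\textup{tot}}(t) \geq 1) \leq \E[\Crt_N^{\textup{tot}}(t)],
\]
and Corollary \ref{cor:explicit} gives $\lim_{N \to \infty} \frac{1}{N} \log \E[\Crt_N^{\textup{tot}}(t)] = \Sigma_{p+q}(t)$. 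Setting $t = -E_0(p+q) - \epsilon$ and using that $\Sigma_{p+q}$ is strictly increasing on $(-\infty,0)$ (evident from its explicit form as a constant plus the strictly concave even function $u_0 \mapsto \Omega(u_0 \sqrt{(p+q)/(p+q-1)}) - u_0^2/2$), together with $\Sigma_{p+q}(-E_0(p+q)) = 0$, one obtains $\Sigma_{p+q}(t) < 0$. This yields the exponential decay $\E[\Crt_N^{\textup{tot}}(t)] \leq \exp(N \Sigma_{p+q}(t)/2)$ for all sufficiently large $N$; absorbing finitely many $N$ into $C_1$ finishes.

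For the numerics, one observes that for $p+q \in \{4,5,6\}$ the relevant root $E$ satisfies $E\sqrt{(p+q)/(p+q-1)} > 2$ (to be checked a posteriori), so the second branch of $\Omega$ applies and $\Sigma_{p+q}(-E) = 0$ becomes a closed-form transcendental equation in $E$ involving a square root and a logarithm, which a standard numerical root-finder evaluates to the stated precision.

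For the asymptotic, write $n = p+q$ and $e_n = E_0(n) \sqrt{n/(n-1)}$. Since $\Sigma_n(t) = \frac{\log(n-1)}{2} + O_t(1)$ for every fixed $t$, we have $\Sigma_n(t) \to +\infty$; hence the zero of $\Sigma_n$ must recede, i.e. $E_0(n) \to \infty$, and in particular $e_n \to \infty$. The expansion $\sqrt{e^2 - 4} = e - 2/e + O(e^{-3})$ at $e \to \infty$ produces cancellations in
\[
    \Omega(e) = \frac{e^2}{4} - \frac{1}{2} - \frac{e \sqrt{e^2-4}}{4} + \log \pa{\frac{e + \sqrt{e^2-4}}{2}}
\]
that collapse it to $\Omega(e) = \log e + O(e^{-2})$. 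Substituting into $\Sigma_n(-E_0(n)) = 0$ and using $e_n^2 = E_0(n)^2(1 + O(1/n))$ yields the fixed-point equation
\[
    E_0(n)^2 = \log n + 1 + 2 \log E_0(n) + o(1).
\]
A standard bootstrap (crudely $E_0(n)^2 = \log n + O(\log \log n)$, then plug back in) gives $E_0(n)/\sqrt{\log n} \to 1$.

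The principal content of the corollary is already contained in Corollary \ref{cor:explicit}, so there is no serious obstacle; the mildly delicate step is the bootstrap at the end, which requires verifying that $E_0(n) \to \infty$ before one is entitled to use the large-$e$ expansion of $\Omega$. Once that is in hand, the remaining analysis is elementary.
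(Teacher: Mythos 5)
Your argument for the concentration bound is identical to the paper's: both reduce $\min_{u,v} \mc{H}_{N,p,q} \leq N(-E_0(p+q)-\epsilon) \Rightarrow \Crt_N^{\textup{tot}}(-E_0(p+q)-\epsilon) \geq 1$, apply Markov, and use the monotonicity of $\Sigma_{p+q}$ to get a strictly negative exponential rate. For the asymptotic $E_0(p+q)/\sqrt{\log(p+q)} \to 1$, you take a finer route than the paper. The paper uses the crude sandwich $0 \leq \Omega(x) \leq |x|$ for $|x| \geq 2$, which bounds $\Sigma_{p+q}$ between two explicit functions whose zeros are solvable in closed form; sandwiching $E_0(p+q)$ between these zeros gives the limit with essentially no expansion at all. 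You instead work out the precise asymptotic $\Omega(e) = \log e + O(e^{-2})$ (which is correct: the algebraic terms cancel to $O(e^{-2})$ and the logarithm contributes $\log e + O(e^{-2})$), substitute into $\Sigma_n(-E_0(n)) = 0$ to get the implicit relation $E_0(n)^2 = \log n + 1 + 2\log E_0(n) + o(1)$, and then bootstrap. Your approach yields a second-order correction term ($E_0(n)^2 = \log n + \log\log n + O(1)$) that the paper's sandwich does not directly give, at the cost of first having to establish $E_0(n) \to \infty$ (which you correctly note, and justify via $\Sigma_n(t) = \frac{\log(n-1)}{2} + O_t(1) \to \infty$ for fixed $t$, forcing the zero to recede) and then closing the implicit equation. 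Both are correct; the paper's is shorter and sufficient for the stated limit, yours is sharper if one wanted an error term.
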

\begin{rem}
One can compute numerically $-E_0(4) \approx -1.794$, $-E_0(5) \approx -1.888$, and $-E_0(6) \approx -1.959$.
\end{rem}

In fact, the functions $\Sigma_{p+q}(t)$ and $\Sigma_{p+q, \textup{min}}(t)$ have already appeared in the literature, in \cite{AufBenCer2013}: They give exactly the complexities of the numbers of critical points and of local minima, respectively, of a spherical pure $(p+q)$-spin glass below level $Nt$. That is, define the spherical pure $(p+q)$-spin Hamiltonian $\mc{H}_{N,p+q}$ over $\sigma = (\sigma_1, \ldots , \sigma_N) \in S^{N-1}$ by
\[
    \mc{H}_{N,p+q}(\sigma) = \frac{1}{N^{(p+q-1)/2}} \sum_{i_1, \ldots, i_{p+q}=1}^N J_{i_1, \ldots, i_{p+q}} \sigma_{i_1} \cdots \sigma_{i_{p+q}},
\]
where the $J$ variables are i.i.d. standard Gaussians, and let $\Crt_N^{\textup{pure } p+q}(t)$ be the number of critical points (and $\Crt_N^{\textup{pure } p+q, \textup{min}}(t)$ be the number of local minima) of $\mc{H}_{N,p+q}$ at which $\mc{H}_{N,p+q} \leq Nt$. Then \cite[Theorems 2.5, 2.8]{AufBenCer2013} show that
\[
    \lim_{N \to \infty} \frac{1}{N} \log \E[\Crt_N^{\textup{pure } p+q}(t)] = \Sigma_{p+q}(t), \quad \lim_{N \to \infty} \frac{1}{N} \log \E[\Crt_N^{\textup{pure } p+q, \textup{min}}(t)] = \Sigma_{p+q, \textup{min}}(t).
\]
(A computation shows that our $\Sigma_{p+q}$ and $\Sigma_{p+q, \textup{min}}$ are their $\Theta_{p+q}$ and $\Theta_{0, p+q}$, respectively. We have used their notation for $-E_0(p+q)$ in the same normalization.)

But we emphasize that, despite the superficial similarity between the pure $p+q$-spin Hamiltonian $\mc{H}_{N,p+q}$ and the pure bipartite $(p,q)$-spin Hamiltonian $\mc{H}_{N,p,q}$ with $\gamma = \frac{p}{p+q}$, they are different processes: Their covariance structures are (assuming $N_1 = \gamma N$ for clarity)
\begin{align*}
    \E[\mc{H}_{N,p+q}(\sigma)\mc{H}_{N,p+q}(\sigma')] &= N^{1-(p+q)} \left(\sum_{i=1}^N \sigma_i \sigma'_i \right)^{p+q}, \\
    \E[\mc{H}_{N,p,q}(u,v)\mc{H}_{N,p,q}(u',v')] &= N^{1-(p+q)} \frac{(p+q)^{p+q}}{p^pq^q} \left( \sum_{i=1}^{\gamma N} u_iu'_i \right)^p \left( \sum_{i=1}^{(1-\gamma)N} v_iv'_i \right)^q.
\end{align*}

\begin{rem}
It seems likely that Assumption \ref{assn}, i.e. the restriction ``neither a linear combination of $(1,q)$ spins for different $q$ values, nor a linear combination of $(p,1)$ spins for different $p$ values,'' is technical rather than fundamental. In \cite{AufChe2014}, the restriction is ``not a pure $(1,1)$ spin.'' See Remark \ref{rem:pure1q} for a discussion of the obstacles.
\end{rem}


\section{Proofs}
\label{sec:bsg}

\subsection{Discussion of proof techniques for the toy model of GOE.}\
\label{subsec:goe} 

The goal of this subsection is to explain our proof techniques, specifically the combination of multiple local laws with respect to different Dyson equations. We do so by sketching a proof of the following (obviously classical) result, which is formally unconnected to the main results of this paper, but rather just a simple setting in which to fix ideas.

Recall that an $N \times N$ GOE matrix  $H_N$ is a real symmetric matrix, with independent entries up to symmetry, which are centered Gaussian variables with $\E[(H_N)_{ij}^2] = \frac{1+\delta_{ij}}{N}$.

\begin{lem}
For each $z \in \mathbb{H}$, there exists a unique solution $m_\infty(z) \in \C$ to the problem
\begin{equation}
\label{eqn:mdegoe}
	1+(z+m_\infty(z))m_\infty(z) = 0, \quad \im m(z) > 0,
\end{equation}
and $m_\infty(z)$ is the Stieltjes transform of a probability measure $\mu_\infty$ on $\R$ with compact support and a bounded, H\"olderian density $\mu_\infty(\cdot)$ with respect to Lebesgue measure.

Furthermore, if $H_N$ is an $N \times N$ GOE matrix, then
\begin{equation}
\label{eqn:goe_example_distance}
	d_{\textup{BL}}(\E[\hat{\mu}_{H_N}],\mu_\infty) \leq N^{-\epsilon}
\end{equation}
for some $\epsilon > 0$, and the largest (resp. smallest) eigenvalue of $H_N$ tends in probability to the right endpoint $r(\mu_\infty)$ (resp. the left endpoint $\ell(\mu_\infty)$).
\end{lem}

Of course, the measure $\mu_\infty$ in the above is the semicircle law, $m_\infty(z)$ has the closed form $m_\infty(z) = \frac{-z+\sqrt{z^2-4}}{2}$, and there are much simpler proofs of this result than the one we are about to sketch. The point is to see what we can do without knowing these closed forms, since we will not have closed forms for bipartite spin glasses. That is, we want to give a global law for $H_N$ with respect to a measure $\mu_\infty$ which has (a) as simple a description as possible (meaning \eqref{eqn:mdegoe} is over scalars, not matrices) and (b) many nice properties. We also give ourselves an additional handicap: We do not allow for ``free'' the result that the Dyson equation \eqref{eqn:introvonneumannmde} over $\C$ has a unique solution which is the Stieltjes transform for some measure. We do this since, in the bipartite case, this result will \emph{not} be for free; there the problem is over $\C^2$ instead of $\C$, and the operator $\mc{S}$ in \eqref{eqn:introvonneumannmde} is \emph{not} symmetric, so there is a unique solution, but it takes work to show that it is the Stieltjes transform of some measure. 

The following definition first appeared in \cite{AjaErdKru2019}.

\begin{defn}
\label{defn:flatness}
A sequence of operators $(\mc{S}_N : \C^{N \times N} \to \C^{N \times N})_{N=1}^\infty$ is called \emph{flat} if it preserves the cone of positive semidefinite matrices and there exists $\kappa > 0$ such that, for all $N$ and all positive semidefinite $N \times N$ matrices $T$, one has
\[
	\frac{1}{\kappa N} \Tr(T) \leq \mc{S}_N[T] \leq \frac{\kappa}{N}\Tr(T).
\]
(Here we use the shorthand notation $a \leq M \leq b$, where $a, b > 0$ and $M$ is a positive semidefinite matrix, for the inequality $a\Id \leq M \leq b\Id$ in the sense of quadratic forms.)
\end{defn}

\begin{proof}[Proof sketch.]
For each $N$, consider the Dyson equation \eqref{eqn:introvonneumannmde} over $\ms{A} = \C^{N \times N}$ with $A = 0$ and with the positivity-preserving linear operator $\mc{S}'_N : \C^{N \times N} \to \C^{N \times N}$ given by $\mc{S}'_N[T] = \frac{1}{N}\Tr(T)\Id_{N \times N}$, which is symmetric with respect to the Frobenius inner product $\ip{T,U} = \Tr{T^\ast U}$. We claim that solutions to this equation are in bijection with solutions to \eqref{eqn:mdegoe} (eventually, of course, there will only be one solution on each side). Indeed, since $\mc{S}'_N$ maps into matrices of the form constant times identity, one sees directly from the Dyson equation that the unique solution matrix must have the form $M_N(z) = m_\infty(z) \Id_{N \times N}$ for some scalar $m_\infty(z) \in \mathbb{H}$ that does not depend on $N$ and that solves \eqref{eqn:mdegoe}. In the other direction, if $m_\infty(z)$ solves \eqref{eqn:mdegoe}, then $m_\infty(z)\Id_{N \times N}$ solves the matrix problem. Thus existence, uniqueness, and the property ``solution is the Stieltjes transform of some compactly supported measure $\mu_\infty$'' descend to \eqref{eqn:mdegoe} from the corresponding results at the matrix level (\cite{HelFarSpe2007} and \cite[Proposition 2.1]{AltErdKru2020}).

One can check that the operator $\mc{S}'_N$ is flat; thus \cite[Proposition 2.2]{AjaErdKru2019} gives the existence of a H\"olderian density $\mu_\infty(\cdot)$ for $\mu_\infty$.

The results of \cite{AltErdKruNem2019}, combined with standard arguments we will make elsewhere in this paper, give 
\[
	d_{\textup{BL}}(\E[\hat{\mu}_{H_N}],\mu_N) \leq N^{-\epsilon}
\]
for some $\epsilon > 0$, where $\mu_N$ is found by solving the Dyson equation \eqref{eqn:introvonneumannmde} over $\ms{A} = \C^N$ with $A = 0$, multiplication in $\C^N$ taken componentwise (i.e., $(a_1, \ldots, a_N)(b_1, \ldots, b_N) = (a_1b_1, \ldots, a_Nb_N)$), and self-energy operator
\[
	\ms{S}_N[(r_1,\ldots,r_N)] = \frac{\sum_{k=1}^N r_k}{N} (1, \ldots, 1) + \frac{1}{N}(r_1,\ldots,r_N).
\]
To show $d_{\textup{BL}}(\mu_N,\mu_\infty) \leq N^{-\epsilon}$, we claim that solutions to this equation are in bijection to solutions of \eqref{eqn:introvonneumannmde} over $\ms{A} = \C^{N \times N}$ with $A = 0$ and self-energy operator $\mc{S}_N[T] = \frac{1}{N} \Tr(T) \Id_{N \times N} + \frac{1}{N}\diag(T)$. Indeed, if $(m_1(z),\ldots,m_N(z))$ solves the problem over $\C^N$, then $\diag(m_1(z),\ldots,m_N(z))$ solves the problem over $\C^{N \times N}$. On the other hand, since $\mc{S}_N$ maps into diagonal matrices, solutions to the problem over $\C^{N \times N}$ must also be diagonal. If it happens that $\diag(m_1(z),\ldots,m_N(z))$ is such a solution, then $(m_1(z),\ldots,m_N(z))$ solves the problem over $\C^N$. Thus these two problems induce the \emph{same} sequence $(\mu_N)_{N=1}^\infty$ of probability measures on $\R$.

But $\mc{S}_N$ and $\mc{S}'_N$ from above, both operators on $\C^{N \times N}$, are in some sense ``close,'' so the measures they induce (namely $\mu_N$ and $\mu_\infty$, respectively) should be close as well. Lemmas useful for formalizing this were proved in \cite[Proposition 3.1]{BenBouMcK2022}, \cite[Lemma 3.1]{BenBouMcK2021II}, and can be used to finish the proof of \eqref{eqn:goe_example_distance}. In essentially the same way, one can give a lower bound on the largest eigenvalue of $H_N$: if it stays below $r(\mu_\infty)-\epsilon$, \eqref{eqn:goe_example_distance} is violated, as witnessed by a test function supported on $(r(\mu_\infty)-\epsilon,r(\mu_\infty))$. It remains to show that this top eigenvalue does not become an outlier.

To show this, we study another random matrix coupled with $H_N$, namely the random matrix $H'_N$ defined entrywise as
\[
	(H'_N)_{ij} = \begin{cases} (H_N)_{ij} & \text{if } i \neq j, \\ \frac{1}{\sqrt{2}} (H_N)_{ii} & \text{if } i = j. \end{cases}
\]
That is, $H'_N$ is like a GOE matrix but with the normalization $\E[(H'_N)_{ij}^2] = \frac{1}{N}$ instead of $\E[(H_N)_{ij}^2] = \frac{1+\delta_{ij}}{N}$. On the one hand, we can apply \cite[Theorem 2.4]{AltErdKruNem2019} to $H'_N$: For some measure $\mu'_N$ arising from the relevant Dyson equation, this result shows $\P(\lambda_{\textup{max}}(H'_N) > r(\mu'_N) + \epsilon) \leq C_\epsilon N^{-100}$ for every $\epsilon > 0$. But actually $\mu'_N = \mu_\infty$: indeed, the relevant Dyson equation from \cite{AltErdKruNem2019} is over $\ms{A} = \C^N$, with $A = 0$ and the self-energy operator $\ms{S}'_N[(r_1,\ldots,r_N)] = \frac{\sum_{k=1}^N r_k}{N} (1,\ldots,1)$. In the same way as above, one can show that solutions to this equation are in bijection with solutions to the problem we first considered, i.e., the problem over $\C^{N \times N}$ with operator $\mc{S}'_N$, which we showed induced the measure $\mu_\infty$. On the other hand, the Weyl inequalities give $\lambda_{\textup{max}}(H_N) \leq \lambda_{\textup{max}}(H'_N) + \|H_N - H'_N\|$. The matrix $H_N - H'_N$ is diagonal, and its diagonal entries are independent Gaussians with variance order $1/N$; thus $\P(\|H_N-H'_N\| > \epsilon) \lesssim \exp(-c_\epsilon N)$ for some constant $c_\epsilon > 0$, which completes the sketch of the proof.
\end{proof}

A note on terminology: It is common to call a Dyson equation over $\ms{A} = \C^{N \times N}$ the ``matrix Dyson equation.'' We will sometimes also call Dyson equations over $\C^N$ the ``matrix Dyson equation.'' The reason is that, as in the proof above, we think of them as morally defined on diagonal $N \times N$ matrices. When necessary, we distinguish between the two by talking about an ``MDE over matrices'' or an ``MDE over scalars.''

\subsection{Preliminaries on matrices, measures, and the Dyson equation.}\

In this section, we introduce the random matrices $H_N(u)$ which will appear in the Kac--Rice formula, along with related random matrices $H'_N(u)$ which are easier to work with, and corresponding measures $\mu_N(u)$ and $\mu_\infty(u)$ from the Dyson equation. Then we prove that these matrices satisfy a number of nice properties, both on their own and with respect to the measures $\mu_N(u)$ and $\mu_\infty(u)$. The reason is that the proofs of our main theorems in this paper are essentially of the form ``Apply \cite[Theorem 4.1, Theorem 4.5]{BenBouMcK2022} to the matrices $H_N(u)$''; the goal of this section is to check the conditions of those theorems. We will also prove Lemma \ref{lem:bsg_scalars}.

\begin{notn}
We write
\[
    I_1 = \llbracket 1, N_1-1 \rrbracket, \qquad I_2 = \llbracket N_1, N-2 \rrbracket.
\]
\end{notn}

For each $u \in \R^3$, define $A_N(u), A'_N(u) \in \R^{((N_1-1)+(N_2-1)) \times ((N_1-1)+(N_2-1))}$ by
\begin{align*}
    A_N(u) &= A_N(u_0,u_1,u_2) = \begin{pmatrix} \frac{N}{N_1}(\frac{\alpha_1\sigma_+}{2}u_1 + \frac{\alpha_1\sigma_-}{2}u_2 - \xi'_1 u_0) \Id_{N_1-1} & 0 \\ 0 & \frac{N}{N_2}(\frac{\alpha_2\sigma_-}{2}u_1 + \frac{\alpha_2\sigma_+}{2}u_2 - \xi'_2u_0) \Id_{N_2-1} \end{pmatrix}, \\
    A'_N(u) &= A'_N(u_0,u_1,u_2) = \begin{pmatrix} \frac{1}{\gamma}(\frac{\alpha_1\sigma_+}{2}u_1 + \frac{\alpha_1\sigma_-}{2}u_2 - \xi'_1 u_0) \Id_{N_1-1} & 0 \\ 0 & \frac{1}{1-\gamma}(\frac{\alpha_2\sigma_-}{2}u_1 + \frac{\alpha_2\sigma_+}{2}u_2 - \xi'_2u_0) \Id_{N_2-1} \end{pmatrix}.
\end{align*}
Next, we define random matrices $W_N, W'_N \in \R^{((N_1-1) + (N_2-1)) \times ((N_1-1) + (N_2-1))}$ one block at a time. Write $G$ for an $(N_1-1) \times (N_2-1)$ matrix with i.i.d. centered Gaussian entries, each with variance $\frac{N\xi''_{12}}{N_1N_2}$. For each $i = 1, 2$, let $G_i = \sqrt{\frac{N(N_i-1)\xi''_i}{N_i^2}}M^{N_i}$, where each $M^{N_i}$ is an $(N_i-1) \times (N_i-1)$ GOE matrix with normalization $\E[(M^{N_i})_{ij}^2] = \frac{1+\delta_{ij}}{N_i-1}$ (notice this normalization differs from that of \cite{AufChe2014}), and where the $M^{N_i}$'s are independent of each other and of $G$. Then we define $W_N$ by
\[
    W_N = \begin{pmatrix} G_1 & G \\ G^T & G_2 \end{pmatrix}.
\]
Let $T_N \in \R^{((N_1-1) + (N_2-1)) \times ((N_1-1) + (N_2-1))}$ be given entrywise by
\[
    (T_N)_{jk} = \begin{cases} \sqrt{\frac{N_1^2}{(1+\delta_{jk})\gamma^2N(N-2)}} & \text{if } j, k \in I_1, \\
    \sqrt{\frac{N_1N_2}{\gamma N(N_2-1)}} = \sqrt{\frac{N_1N_2}{(1-\gamma) N(N_1-1)}} & \text{if } j \in I_1, k \in I_2 \text{ or } j \in I_2, k \in I_1, \\ 
    \sqrt{\frac{N_2^2}{(1+\delta_{jk})(1-\gamma)^2N(N-2)}} & \text{if } j, k \in I_2, \end{cases}
\]
and let
\[
    W'_N = T_N \odot W_N.
\]
(That is, $W'_N$ is like $W_N$, but all the variances are multiplied by a carefully chosen factor which is close to one off the diagonal and close to $1/2$ on the diagonal.) Finally, let
\[
    H_N(u) = A_N(u) + W_N, \qquad H'_N(u) = A'_N(u) + W'_N.
\]
The matrix $H_N(u)$ is the one naturally appearing in the Kac--Rice formula, as we shall see, but it is well approximated by the matrix $H'_N(u)$, which is easier to work with, since the sequence of \emph{its} Dyson-equation measures is constant.

While the definitions are fresh, we store the following lemma for later use:

\begin{lem}
\label{lem:bsg_hn_vs_hprimen}
For every $R > 0$ and every $\epsilon > 0$, we have
\[
    \sup_{u \in B_R(0)} \P(\|H_N(u) - H'_N(u)\| \geq \epsilon) = \OO_{R,\epsilon}(e^{-N^{0.49}}).
\]
\end{lem}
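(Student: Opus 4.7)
The plan is to decompose
\[
H_N(u) - H'_N(u) = [A_N(u) - A'_N(u)] + [W_N - W'_N]
\]
and control each piece. Since $W_N - W'_N$ does not depend on $u$, uniformity in $u \in B_R(0)$ will follow from (i) a deterministic bound on $A_N(u) - A'_N(u)$ and (ii) a single probabilistic bound on $W_N - W'_N$. For (i), the matrix is diagonal with entries of the form $\bigl(\tfrac{N}{N_i} - \tfrac{1}{\gamma_i}\bigr)(\alpha_i u_i - \xi'_i u_0)$ with $\gamma_1 = \gamma$, $\gamma_2 = 1-\gamma$. Using $N_1 - 1 = \gamma(N-2)$ (and analogously for $N_2$) gives $|N/N_i - 1/\gamma_i| = O(1/N)$, so $\|A_N(u) - A'_N(u)\| = O_R(1/N) < \epsilon/2$ for $N$ large, uniformly over $u \in B_R(0)$.

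For (ii), I would write $W_N - W'_N = (J - T_N) \odot W_N$, where $J$ is the all-ones matrix. A direct computation from the definition of $T_N$ shows that for $j \neq k$ one has $(T_N)_{jk} = 1 + O(1/N)$, whereas on the diagonal $(T_N)_{jj} = \tfrac{1}{\sqrt{2}} + O(1/N)$. This dichotomy forces a further split $W_N - W'_N = D_N + E_N$ into its diagonal and off-diagonal parts. For $D_N$, each entry is centered Gaussian with variance of order $1/N$ (a bounded factor times $\Var((W_N)_{jj}) = O(1/N)$), so the standard Gaussian tail bound together with a union bound over the $N-2$ diagonal entries yields
\[
\P\bigl(\|D_N\| \geq \epsilon/4\bigr) = \P\bigl(\max_j |(D_N)_{jj}| \geq \epsilon/4\bigr) \leq 2N\exp(-cN\epsilon^2) = O(\exp(-N^{0.49})).
\]

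For $E_N$, each entry is centered Gaussian with variance $O(N^{-2}) \cdot O(N^{-1}) = O(N^{-3})$. The bound $|\|M\| - \|M'\|| \leq \|M-M'\|_F$ shows that $\|E_N\|$, viewed as a function of the independent standard-Gaussian coordinates underlying $W_N$, is $O(N^{-3/2})$-Lipschitz, so Borell--TIS gives $\P(|\|E_N\|-\E\|E_N\||\geq s) \leq 2\exp(-c s^2 N^3)$. A Wigner-type bound (noncommutative Khintchine, or a direct moment computation using the block structure and the fact that each row has variance-sum $O(1/N^2)$) gives $\E\|E_N\| = O(1/N)$, so taking $s = \epsilon/4$ yields $\P(\|E_N\| \geq \epsilon/4) = O(\exp(-cN))$. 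A triangle inequality and union bound across the three pieces then finish the argument.

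The only real subtlety is the dichotomy noted above: if one tried to treat $(J - T_N) \odot W_N$ as a single random matrix with entry variances $O(1/N^3)$, the diagonal would destroy that bound. The split into $D_N + E_N$ bypasses this because the bad diagonal contribution enters only through a diagonal matrix, whose operator norm is the maximum of $N$ scalar Gaussians of standard deviation $O(1/\sqrt{N})$, hence $O(\sqrt{\log N / N})$ with stretched-exponentially high probability; the off-diagonal contribution, where the entries of $J - T_N$ really are $O(1/N)$, is then controlled by straightforward Wigner-type concentration.
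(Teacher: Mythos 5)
Your proof is correct and follows essentially the same decomposition as the paper: separate the deterministic difference $A_N(u)-A'_N(u)$ (which is $O(\|u\|/N)$ uniformly over $B_R(0)$), then split the $u$-independent random part $W_N - W'_N = (J-T_N)\odot W_N$ into its diagonal and off-diagonal pieces, which have entry variances $O(1/N)$ and $O(1/N^3)$ respectively, and control each. The only difference is in how you bound the off-diagonal piece $E_N$: you use Borell--TIS together with an expected-norm bound, while the paper uses the cruder (but more elementary) route of a union bound on $\|E_N\|_{\max}$ followed by $\|E_N\|\leq N\|E_N\|_{\max}$; both give bounds well within the stated $O(e^{-N^{0.49}})$, with yours in fact giving $O(e^{-cN})$ overall.
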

\begin{proof}
Write $E_N = W_N - W'_N$. From the definitions, we check that $E_N$ is a matrix of independent Gaussian entries up to symmetry, and that there exists some constant $C = C_{\xi,\gamma}$ such that the off-diagonal entries of $E_N$ have variance at most $C/N^3$ and the diagonal entries have variance at most $C/N$. If $\|\cdot\|_{\max{}}$ is the maximum norm for matrices, we thus have
\[
    \P\left(\|E_N - \diag(E_N)\|_{\max{}} \geq \frac{C}{N^{5/4}}\right) \leq \frac{N(N-1)}{2}\P\left(\abs{\mc{N}(0,1)} \geq N^{1/4}\right) \leq N^2e^{-\frac{\sqrt{N}}{2}}.
\]
Then 
\begin{align*}
    \P(\|E_N\| \geq \epsilon) &\leq \P\left(\|E_N\| \geq \epsilon, \|E_N - \diag(E_N)\| \leq \frac{C}{N^{1/4}}\right) + N^2e^{-\frac{\sqrt{N}}{2}} \leq \P\left(\|\diag(E_N)\| \geq \frac{\epsilon}{2}\right) + N^2e^{-\frac{\sqrt{N}}{2}},
\end{align*}
where the last inequality holds for $N$ large enough. But now $\diag(E_N)$ has independent Gaussian entries with variance order $1/N$, so $\P(\|\diag(E_N)\| \geq \epsilon/2)$ is order $e^{-cN}$, up to polynomial factors in $N$, for some $c = c_{\xi,\gamma,\epsilon}$; thus
\[
    \P(\|E_N\| \geq \epsilon) = \OO(e^{-N^{0.49}}),
\]
say. On the other hand, we have
\begin{equation}
\label{eqn:lemma3.1.3.6}
\begin{split}
    \|A_N(u) - A'_N(u)\| &= \max\left\{ \abs{\frac{N}{N_1} - \frac{1}{\gamma}}\abs{\frac{\alpha_1\sigma_+}{2}u_1 + \frac{\alpha_1\sigma_-}{2} u_2 - \xi'_1u_0}, \abs{\frac{N}{N_2} - \frac{1}{1-\gamma}} \abs{\frac{\alpha_2\sigma_-}{2}u_1 + \frac{\alpha_2\sigma_+}{2}u_2 - \xi'_2u_0} \right\} \\
    &= \OO\left(\frac{\|u\|}{N}\right)
\end{split}
\end{equation}
Since
\[
    \P(\|H_N(u) - H'_N(u)\| \geq \epsilon) \leq \P\left(\|W_N - W'_N\| \geq \frac{\epsilon}{2}\right) + \mathbf{1}_{\|A_N(u) - A'_N(u)\| \geq \frac{\epsilon}{2}},
\]
this completes the proof.
\end{proof}

In the following, we use the sequences
\begin{alignat*}{3}
	&b_{11}^{(N)} = \frac{N\xi''_1}{N_1^2}, \qquad &&b_{22}^{(N)} = \frac{N\xi''_2}{N_2^2}, \qquad & &b_{12}^{(N)} = \frac{N\xi''_{12}}{N_1N_2}, \\
	&\widetilde{b_{11}}^{(N)} = \frac{\xi''_1}{\gamma(N_1-1)}, \qquad &&\widetilde{b_{22}}^{(N)} = \frac{\xi''_2}{(1-\gamma)(N_2-1)}, \qquad & &\widetilde{b_{12}}^{(N)} = \frac{\xi''_{12}}{\gamma(N_2-1)} = \frac{\xi''_{12}}{(1-\gamma)(N_1-1)}, 
\end{alignat*}
where the last equality follows from \eqref{eqn:ratio_gamma}, as well as the matrix
\[
    \tau = \diag\left( \underbrace{b_{11}^{(N)}, b_{11}^{(N)}, \ldots}_{N_1-1 \text{ times}}, \underbrace{b_{22}^{(N)}, b_{22}^{(N)}, \ldots}_{N_2-1 \text{ times}} \right),
\]
and the linear operators $\mc{S}_N, \mc{S}'_N : \C^{(N-2) \times (N-2)} \to \C^{(N-2) \times (N-2)}$ defined by their action on block matrices $T = \left(\begin{smallmatrix} T_{11} & T_{12} \\ T_{21} & T_{22} \end{smallmatrix}\right)$ (with the sizes $T_{ij} \in \C^{(N_i-1) \times (N_j-1)}$ for $i, j= 1,2$) by
\begin{align}
\label{eqn:bsg_defofsN}
\begin{split}
    \mc{S}_N\left[ \begin{pmatrix} T_{11} & T_{12} \\ T_{21} & T_{22} \end{pmatrix} \right] &= \begin{pmatrix} \left(b_{11}^{(N)} \Tr(T_{11}) + b_{12}^{(N)}\Tr(T_{22})\right)\Id & 0 \\ 0 & \left(b_{12}^{(N)} \Tr(T_{11}) + b_{22}^{(N)}\Tr(T_{22})\right) \Id \end{pmatrix} + \tau \odot \diag(T), \\
    \mc{S}'_N\left[ \begin{pmatrix} T_{11} & T_{12} \\ T_{21} & T_{22} \end{pmatrix} \right] &= \begin{pmatrix} \left(\widetilde{b_{11}}^{(N)} \Tr(T_{11}) + \widetilde{b_{12}}^{(N)} \Tr(T_{22})\right) \Id & 0 \\ 0 & \left(\widetilde{b_{12}}^{(N)} \Tr(T_{11}) + \widetilde{b_{22}}^{(N)} \Tr(T_{22})\right) \Id \end{pmatrix}.
\end{split}
\end{align}
(Recall that $\odot$ is the entrywise (Hadamard) product of matrices and $\diag(T)$ is the diagonal matrix obtained from $T$ by setting all off-diagonal entries to zero, so $\mc{S}_N$ and $\mc{S}'_N$ map into diagonal matrices.)

\begin{lem}
\label{lem:bsg_flatness}
Under Assumption \ref{assn}, the sequences $(\mc{S}_N)_{N=1}^\infty$ and $(\mc{S}'_N)_{N=1}^\infty$ are flat in the sense of Definition \ref{defn:flatness}. Furthermore, we have
\begin{align}
	\sup_N \max(\|A_N(u)\|,\|A'_N(u)\|) &= \OO(\|u\|), \label{eqn:bsg_3.1.3.3} \\
	\sup_N \max(\|\mc{S}_N\|, \|\mc{S}'_N\|, \|\mc{S}_N\|_{\textup{hs} \to \|\cdot\|}, \|\mc{S}'_N\|_{\textup{hs} \to \|\cdot\|}) &< \infty, \label{eqn:bsg_sNopnorm} \\ 
	\|\mc{S}_N - \mc{S}'_N\| &= \OO\left(\frac{1}{N}\right). \label{eqn:bsg_snclosesn'} \\
	\|A_N(u) - A'_N(u)\| &= \OO\left(\frac{\|u\|}{N}\right) \label{eqn:bsg_3.1.3.6}
\end{align}
\end{lem}
\begin{proof}
Since $\abs{\frac{N_1}{N} - \gamma} = \OO(\frac{1}{N})$ and $\xi''_1, \xi''_2 > 0$, we can find $\kappa$ such that
\[
    \frac{1}{\kappa(N-2)} \leq b_{11}^{(N)}, b_{22}^{(N)}, b_{12}^{(N)}, \widetilde{b_{11}}^{(N)}, \widetilde{b_{22}}^{(N)}, \widetilde{b_{12}}^{(N)} \leq \frac{\kappa}{N-2}.
\]
If $T \geq 0$, then $0 \leq \tau \odot \diag(T) \leq \frac{\kappa}{N-2}\Tr(T)$; this suffices to prove flatness for both operators. Since $A_N$ and $A'_N$ are diagonal with bounded entries on the diagonal (via \eqref{eqn:ratio_gamma}), \eqref{eqn:bsg_3.1.3.3} is immediate. The estimates
\begin{equation}
\label{eqn:bsg_hadamard}
    \|\tau \odot \diag(T)\| \leq \frac{\kappa}{N-2}\|T\|
\end{equation}
and
\begin{align*}
    \abs{b_{\delta}^{(N)} \Tr(T_{11}) + b_{\gamma}^{(N)} \Tr(T_{22})} \leq \frac{\kappa}{N-2}(\abs{\Tr(T_{11})} + \abs{\Tr(T_{22})}) \leq \kappa\|T\| \\
    \abs{\widetilde{b_{\delta}}^{(N)} \Tr(T_{11}) + \widetilde{b_{\gamma}^{(N)}} \Tr(T_{22})} \leq \frac{\kappa}{N-2}(\abs{\Tr(T_{11})} + \abs{\Tr(T_{22})}) \leq \kappa\|T\|
\end{align*}
(valid for any $\delta, \gamma \in \{11, 12, 22\}$) establish \eqref{eqn:bsg_sNopnorm}. Finally, if we define for $\delta \in \{11, 12, 22\}$ the sequences 
\[
	a_{\delta}^{(N)} = \left(b_{\delta}^{(N)} - \widetilde{b_{\delta}}^{(N)}\right)(N-2),
\]
then using \eqref{eqn:bsg_hadamard} we conclude 
\[
	\|\mc{S}_N(T) - \mc{S}'_N(T)\| \leq \left( \frac{\kappa}{N-2} + \max\{ |a_{11}^{(N)}| + |a_{12}^{(N)}|, |a_{12}^{(N)}| + |a_{22}^{(N)}|\}\right) \|T\|
\]
for all $T$. But we assumed $\frac{N_1-1}{N-2} = \gamma$ in \eqref{eqn:ratio_gamma}, which tells us $\max(|a_{11}^{(N)}|, |a_{12}^{(N)}|, |a_{22}^{(N)}|) = \OO(1/N)$; this completes the proof of \eqref{eqn:bsg_snclosesn'}. The estimate \eqref{eqn:bsg_3.1.3.6} was actually established above, in \eqref{eqn:lemma3.1.3.6}; we only copy it here since we want to cite \eqref{eqn:bsg_3.1.3.3} - \eqref{eqn:bsg_3.1.3.6} at once below.
\end{proof}

\begin{lem}
\label{lem:mde_properties}
Fix any $u \in \R^3$. Consider the MDEs over $\C^{(N-2) \times (N-2)}$ 
\begin{align}
	\Id + (z\Id - A_N(u) + \mc{S}_N[M_N(u,z)])M_N(u,z) = 0 \quad \text{subject to} \quad \im M_N(u,z) > 0, \label{eqn:mdesn} \\
	\Id + (z\Id - A'_N(u) + \mc{S}'_N[M'_N(u,z)])M'_N(u,z) = 0 \quad \text{subject to} \quad \im M'_N(u,z) > 0. \label{eqn:mdes'n}
\end{align}
Then $\frac{1}{N-2}\Tr(M_N(u,z))$ is the Stieltjes transform of a probability measure $\mu_N(u)$ on $\R$, and $\frac{1}{N-2}\Tr(M'_N(u,z))$ is the Stieltjes transform of a probability measure $\mu_\infty(u)$ on $\R$ which does not depend on $N$. These measures have densities $\mu_N(u,\cdot)$ and $\mu_\infty(u,\cdot)$, which are compactly supported, H\"olderian, and bounded locally uniformly in $u$, meaning there exists $C(u)$ with $\sup_{u \in B_R(0)} C(u) < \infty$ for every $R > 0$ and 
\begin{align*}
	(\supp \mu_\infty(u)) \cup (\cup_{N=1}^\infty \supp \mu_N(u)) &\subset [-C(u),C(u)], \\
	\sup_{x \neq y} \frac{\max(\max_N \abs{\mu_N(u,x)-\mu_N(u,y)}, \abs{\mu_\infty(u,x) - \mu_\infty(u,y)})}{\abs{x-y}^c} &\leq C(u), \\
	\|\mu_N(u,\cdot)\|_{L^\infty} &\leq C(u)
\end{align*}
for some small, universal $c > 0$.
\end{lem}
\begin{proof}
Both $\mc{S}_N$ and $\mc{S}'_N$ are linear operators, preserving the cone of positive semidefinite matrices, that are symmetric with respect to the Frobenius inner product. Thus the normalized traces of $M_N(u,z)$ and $M'_N(u,z)$ are the Stieltjes transforms of some compactly-supported probability measures $\mu_N$ and $\mu'_N$ on $\R$ (later we will show $\mu'_N$ does not depend on $N$), and if we define
\begin{align}
	\kappa(u) &= \sup_N\|A_N(u)\| + 2(\sup_N\|\mc{S}_N\|)^{1/2}, \label{eqn:munsupport} \\
	\kappa'(u) &= \sup_N\|A'_N(u)\| + 2(\sup_N\|\mc{S}'_N\|)^{1/2}, \notag
\end{align}
then \cite[Proposition 2.1]{AjaErdKru2019} (see (2.7), (2.8) there) gives the support conditions $\cup_{N=1}^\infty \supp(\mu_N(u)) \subset [-\kappa(u),\kappa(u)]$ and $\cup_{N=1}^\infty \supp(\mu'_N(u)) \subset [-\kappa'(u),\kappa'(u)]$. From Lemma \ref{lem:bsg_flatness}, we have $\sup_{u \in B_R(0)} \max(\kappa(u),\kappa'(u)) < \infty$. Since $\mc{S}_N$ is flat, \cite[Proposition 2.2]{AjaErdKru2019} yields (a) that each $\mu_N(u)$ admits a density $\mu_N(u,\cdot)$ with respect to Lebesgue measure, and (b) that each $\mu_N(u,\cdot)$ is H\"olderian, with a H\"older exponent that is universal and a H\"older constant that can be taken uniformly over $u \in B_R(0)$. Hence the densities are bounded, uniformly over $u \in B_R(0)$. Since $\mc{S}'_N$ is flat, the same holds for $\mu'_N(u)$.

It remains only to show that each sequence $\mu'_N(u)$ is in fact constant. Since $\mc{S}'_N$ maps into diagonal matrices and $A'_N(u)$ is diagonal, we can see directly from the MDE \eqref{eqn:mdes'n} that $M'_N(u,z)$ must be diagonal. By looking at the MDE componentwise, we see that the entries on the diagonal can only take two values, which we will call $m_1(u,z)$ (for the first $N_1-1$ entries) and $m_2(u,z)$ (for the last $N_2-1$ entries). Writing \eqref{eqn:mdes'n} out in components shows that $\{m_1(u,z),m_2(u,z)\}$ is a solution to \eqref{eqn:bsg_scalars}, i.e., $m_1(u,z)$ and $m_2(u,z)$ do not depend on $N$. Then from \eqref{eqn:ratio_gamma} we have that
\[
	\frac{1}{N-2}\Tr(M'_N(u,z)) = \frac{N_1-1}{N-2} m_1(u,z) + \frac{N_2-1}{N-2} m_2(u,z) = \gamma m_1(u,z) + (1-\gamma) m_2(u,z)
\]
does not depend on $N$ either, so $\mu'_N$ is independent of $N$ as claimed.
\end{proof}

\begin{proof}[Proof of Lemma \ref{lem:bsg_scalars}]
In the proof of Lemma \ref{lem:mde_properties}, we showed that \eqref{eqn:bsg_scalars} has at least one solution, and that, for this solution, $\gamma m_1(u,z) + (1-\gamma) m_2(u,z)$ is the Stieltjes transform of a probability measure $\mu_\infty(u)$ with the desired properties, so all that remains is uniqueness. But uniqueness for \eqref{eqn:bsg_scalars} follows from uniqueness for \eqref{eqn:mdes'n}, since one can check that
\[
	M'_N(u,z) = \diag\left( \underbrace{m_1(u,z), \ldots, m_1(u,z)}_{N_1-1 \text{ times}}, \underbrace{m_2(u,z), \ldots, m_2(u,z)}_{N_2 - 1 \text{ times}} \right)
\]
exhibits a solution to \eqref{eqn:mdes'n} whenever $\{m_1(u,z),m_2(u,z)\}$ solves \eqref{eqn:bsg_scalars}.
\end{proof}

\begin{lem}
\label{lem:bsg_checkingassnsK}

For each $u \in \R^3$, the sequence $(H_N(u))_{N=1}^\infty$ of random matrices satisfies the assumptions of \cite[Theorem 1.2]{BenBouMcK2022}. In fact, the assumptions are satisfied uniformly over compact sets of $u$ (``locally uniformly in $u$'').\footnote{This means, for example, that \cite[(1-10)]{BenBouMcK2022}, which reads ``there exists $\kappa > 0$ such that ${\rm W}_1(\E[\hat{\mu}_{H_N}],\mu_N) \leq N^{-\kappa}$,'' is replaced with ``for every compact $K \subset \R^3$, there exists $\kappa = \kappa(K) > 0$ with $\sup_{u \in K} {\rm W}_1(\E[\hat{\mu}_{H_N}(u)],\mu_N(u)) \leq N^{-\kappa}$,'' and similarly for the other assumptions.}
Additionally,
\begin{equation}
\label{eqn:bsg_wnleftedge}
    \liminf_{N \to \infty} \lambda_{\min{}}(W_N) \geq -2\sqrt{\sup_N \|\mc{S}_N\|} - 1 \quad \text{a.s.}
\end{equation}
\end{lem}
\begin{proof}
The structure of \cite{BenBouMcK2022} is as follows: Theorems 1.1 and 1.2 there are general theorems saying ``if random matrices $H_N$ and measures $\mu_N$ satisfy certain assumptions, then determinant concentration holds, in the sense that $\lim_{N \to \infty} \left(\frac{1}{N}\log \E[\abs{\det(H_N)}] - \int_\R \log\abs{\lambda}\mu_N(\diff \lambda)\right) = 0$.'' They are followed by a sequence of corollaries of the form ``random matrices in this family (for example, Wigner matrices, Erd\H{o}s-R\'enyi matrices, etc.), with certain measures $\mu_N$, satisfy determinant concentration.'' The proof of each of these corollaries looks like either ``random matrices in this family, with these measures $\mu_N$, satisfy the assumptions of Theorem 1.1'' or ``random matrices in this family, with these measures $\mu_N$, satisfy the assumptions of Theorem 1.2.''

Our random matrices $H_N(u)$ actually fit into two of the families considered in these corollaries (called ``Gaussian matrices with a (co)variance profile'' and ``Block-diagonal Gaussian matrices''), both of which are proved via Theorem 1.2. It is technically more convenient to consider the latter (here with just one block). That is, we only need to show that the matrices $H_N(u)$ satisfy the assumptions of Corollary 1.10 in \cite{BenBouMcK2022} with $K = 1$, which are called (MS), (MF), and (R); then the proof of this corollary there shows that the matrices also satisfy the assumptions of \cite[Theorem 1.2]{BenBouMcK2022}. In fact, a close examination of the proof of Corollary 1.10 shows that, to show the assumptions of Theorem 1.2 locally uniformly in $u$, it suffices to show the assumptions of Corollary 1.10 locally uniformly in $u$.

The bounded-mean condition (MS) was verified, locally uniformly in $u$, in \eqref{eqn:bsg_3.1.3.3}. The mean-field-randomness condition (MF), which says roughly that the variances $s_{jk} = \Var((W_N)_{jk})$ are all order $1/N$, is clear, since
\[
    s_{jk} = \begin{cases} \frac{N\xi''_1(1+\delta_{jk})}{N_1^2} & \text{if } j, k \in I_1, \\ 
    \frac{N\xi''_{12}}{N_1N_2} & \text{if } j \in I_1, k \in I_2 \text{ or } j \in I_2, k \in I_1, \\ 
    \frac{N\xi''_2(1+\delta_{jk})}{N_2^2} & \text{if } j, k \in I_2. \end{cases}
\]
Notice also that this condition does not depend on $u$ at all, since $W_N$ does not depend on $u$.

Now we check the regularity assumption (R). The appropriate Dyson equation in this context \cite[(1-19)]{BenBouMcK2022} is a system of $N-2$ scalar equations, which for our matrices takes the following form: Define the operators $\ms{S}_i : \C^{N-2} \to \C$ by
\[
    \ms{S}_i[\mathbf{r}] = \begin{cases} \frac{N\xi''_1}{N_1^2} \sum_{k \in I_1} (1+\delta_{ik}) r_k + \frac{N\xi''_{12}}{N_1N_2} \sum_{k \in I_2} r_k & \text{if } i \in I_1, \\ \frac{N\xi''_{12}}{N_1N_2} \sum_{k \in I_1} r_k + \frac{N\xi''_2}{N_2^2} \sum_{k \in I_2} (1+\delta_{ik})r_k & \text{if } i \in I_2. \end{cases}
\]
Then seek the unique solution $\mathbf{m}(u,z) = (m_1(u,z), \ldots, m_{N-2}(u,z)) \in \C^{N-2}$ to
\begin{equation}
\label{eqn:mdescalars}
	1+(z-(A_N(u))_{ii} + \ms{S}_i(\mathbf{m}(u,z)))m_i(u,z) = 0, \quad \im m_i(u,z) > 0, \quad i = 1, \ldots, N-2,
\end{equation}
and consider the measure with Stieltjes transform $\frac{1}{N-2}\sum_{i=1}^{N-2} m_i(u,z)$ at $z$. We claim that solutions to \eqref{eqn:mdescalars} are in bijection with solutions to \eqref{eqn:mdesn}. Indeed, since $\mc{S}_N$ maps into diagonal matrices and $A_N(u)$ is diagonal, we see directly from \eqref{eqn:mdesn} that any solution matrix $M_N(u,z)$ must be diagonal, say $M_N(u,z) = \diag(m_1(u,z), \ldots, m_{N-2}(u,z))$. Writing out \eqref{eqn:mdesn} in components, we see that $\mathbf{m}(u,z) = (m_1(u,z), \ldots, m_{N-2}(u,z))$ then also solves \eqref{eqn:mdescalars}. On the other hand, if $\mathbf{m}(u,z) = (m_1(u,z), \ldots, m_{N-2}(u,z))$ solves \eqref{eqn:mdescalars}, then $M_N(u,z) = \diag(m_1(u,z), \ldots, m_{N-2}(u,z))$ solves \eqref{eqn:mdesn}. Hence the measure induced by \eqref{eqn:mdescalars} (i.e., with Stieltjes transform $\frac{1}{N-2} \sum_{i=1}^{N-2} m_i(u,z)$ at $z$) is actually $\mu_N(u)$, the measure induced by \eqref{eqn:mdesn}. All the desired properties of this measure were already proved, locally uniformly in $u$, in Lemma \ref{lem:mde_properties}.

To check \eqref{eqn:bsg_wnleftedge}, we note that $W_N = H_N(0)$, and that $\mu_N(0)$ is supported in $[-2\sqrt{\sup_N\|\mc{S}_N\|},2\sqrt{\sup_N\|\mc{S}_N\|}]$ by \eqref{eqn:munsupport}. Then \cite[Theorem 2.4, Remark 2.5(v)]{AltErdKruNem2019} gives\footnote{
Our notation relates to the notation of \cite{AltErdKruNem2019} as follows: In their (2.1), we take $L = 1$, $\ell = 1$, and identify $\C^{1 \times 1} \otimes \C^{N \times N} \cong \C^{N \times N}$. We take all $\widetilde{a_i} = \widetilde{\beta_\nu} = \widetilde{\gamma_\nu} = 0$, $\widetilde{\alpha_1} = 1$, and $X_1 = W_N$; then their $\mathbf{X}$ is our $W_N$, and their (2.11) and (2.12) are for free. Their $s_{ij}^1$ is the same as our $s_{ij}$ above, and their $t_{ij}$'s and $y_{ij}$'s are all zero, which verifies (2.9). Their (2.10) holds since the entries of $W_{ij}$ are Gaussian with variance order $1/N$, so their Theorem 2.4 holds. Their $\rho$ is our $\mu_N(0)$; since $B \defeq [-2\sqrt{\sup_N\|\mc{S}_N\|}-1,2\sqrt{\sup_N\|\mc{S}_N\|}+1]$ contains an open superset of $\supp(\mu_N(0))$, and their $(\mathbb{D}_\epsilon)_{\epsilon > 0}$ form a nested sequence whose intersection is $\supp(\mu_N(0))$ by their Remark 2.5(v), we have $\mathbb{D}_\epsilon \subset B$ for some $\epsilon > 0$; we use this $\epsilon$ in applying their Theorem 2.4. This completes the verification of \eqref{eqn:kroneckerlocation}.
}
\begin{equation}
\label{eqn:kroneckerlocation}
    \P\left(\lambda_{\min{}}(W_N) \leq -2\sqrt{\sup_N\|\mc{S}_N\|} - 1\right) \leq \frac{C}{N^{100}}
\end{equation}
for some constant $C$, which suffices.
\end{proof}

\begin{lem}
\label{lem:bsg_rhoinftyisgood}
For each $R$ there exists $\kappa$ with
\begin{equation}
\label{eqn:bsg_wasserstein}
    \sup_{u \in B_R(0)} {\rm W}_1(\mu_N(u),\mu_\infty(u)) \leq N^{-\kappa}.
\end{equation}
Additionally, there exists $C > 0$ such that
\begin{equation}
\label{eqn:bsg_sublinear}
    \E[\abs{\det(H_N(u))}] \leq (C\max(\|u\|,1))^N.
\end{equation}
Finally, for every $R$ and $\epsilon$ we have
\begin{equation}
\label{eqn:bsg_NlogN}
    \lim_{N \to \infty} \frac{1}{N\log N} \log\left[ \sup_{u \in B_R(0)} \P(d_{\textup{BL}}(\hat{\mu}_{H_N(u)}, \mu_\infty(u)) > \epsilon) \right] = -\infty.
\end{equation}
\end{lem}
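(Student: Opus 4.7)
The plan is to leverage the matrix Dyson equation (MDE) machinery of \cite{AjaErdKru2019, AltErdKruNem2019}, building on the setup already established in Lemma \ref{lem:bsg_checkingassnsK} and the operator-norm estimates of Lemma \ref{lem:bsg_flatness}.

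To establish that $\mu_\infty(u)$ is well defined with a bounded density and compact support locally uniformly in $u$, I would identify the scalar system \eqref{eqn:bsg_scalars} as the reduction of the matrix MDE associated to $\mc{S}'_N$ and $A'_N(u)$: by the block-permutation symmetry, any solution must take the block-constant form $M(u,z) = \diag(m_1(u,z) \Id_{N_1-1}, m_2(u,z) \Id_{N_2-1})$, and substituting this ansatz collapses the matrix MDE to the $N$-independent pair \eqref{eqn:bsg_scalars}. Under \eqref{eqn:nondegenerate} the corresponding operator on $\C^2$ is flat, PSD-preserving, and self-adjoint under $\ip{R,T} = \Tr(R^\ast T)$, so \cite[Propositions 2.1, 2.2]{AjaErdKru2019} yield existence and uniqueness of $(m_1,m_2)$, a bounded H\"{o}lder density, and compact support, all uniform on $u \in B_R(0)$.

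For the Wasserstein closeness \eqref{eqn:bsg_wasserstein}, Lemma \ref{lem:bsg_flatness} gives $\|\mc{S}_N - \mc{S}'_N\| = \OO(1/N)$ and a direct computation gives $\|A_N(u) - A'_N(u)\| = \OO(\|u\|/N)$. Quantitative stability of the MDE (for example in the form of \cite[Proposition 2.1]{AjaErdKru2019}) then propagates these perturbations to a uniform bound $|g_N(u,z) - g_\infty(u,z)| = \OO(1/N)$ on Stieltjes transforms for $z$ above any fixed positive imaginary height. Combined with the uniform boundedness of both densities and a common compact support, a Helffer-Sj\"{o}strand (or direct contour-integration) argument converts this into ${\rm W}_1(\mu_N(u), \mu_\infty(u)) = \OO(N^{-\kappa})$ for some $\kappa > 0$, uniform on $u \in B_R(0)$.

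The sub-exponential determinant bound \eqref{eqn:bsg_sublinear} is the easiest step: $\abs{\det(H_N(u))} \leq \|H_N(u)\|^N \leq (\|A_N(u)\| + \|W_N\|)^N$, with $\|A_N(u)\| = \OO(\|u\|)$ from \eqref{eqn:bsg_ANbd} and $\E[\|W_N\|^N]^{1/N} = \OO(1)$ by Gaussian concentration of $\|W_N\|$ around its $\OO(1)$ mean; a binomial expansion and Jensen's inequality on each factor close it. Finally, the concentration \eqref{eqn:bsg_NlogN} follows by combining (i) Gaussian concentration (Herbst) for Lipschitz functionals of the Gaussian entries of $H_N(u)$, yielding $\P(d_{\textup{BL}}(\hat{\mu}_{H_N(u)}, \E\hat{\mu}_{H_N(u)}) > \epsilon/2) \leq \exp(-cN^2\epsilon^2)$, with (ii) the local law of \cite[Theorem 2.4]{AltErdKruNem2019} (whose hypotheses were verified in Lemma \ref{lem:bsg_checkingassnsK}) together with \eqref{eqn:bsg_wasserstein}, to control $d_{\textup{BL}}(\E\hat{\mu}_{H_N(u)}, \mu_\infty(u))$; both rates dominate $e^{-N\log N}$ easily. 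I expect the main technical difficulty to be making the MDE stability step genuinely uniform in $u \in B_R(0)$, since the stability constants in \cite{AjaErdKru2019} depend on spectral data of the MDE solution that varies with $u$; extracting this uniformity is what forces the restriction to bounded sets and the slightly weaker polynomial rate $N^{-\kappa}$ rather than the naive $1/N$ from Lemma \ref{lem:bsg_flatness}.
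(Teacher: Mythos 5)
Your proposal follows essentially the same route as the paper's proof: reduce the matrix MDE for $(\mc{S}'_N, A'_N(u))$ to the two-variable scalar system \eqref{eqn:bsg_scalars}, invoke \cite[Propositions 2.1, 2.2]{AjaErdKru2019} for existence/uniqueness and bounded H\"{o}lderian densities, then obtain \eqref{eqn:bsg_wasserstein}, \eqref{eqn:bsg_sublinear}, \eqref{eqn:bsg_NlogN} via MDE stability plus Stieltjes-to-Wasserstein conversion, a crude operator-norm bound on $\abs{\det H_N(u)}$, and Gaussian concentration combined with the local law. The paper packages the last three steps as citations to the companion papers \cite{BenBouMcK2021I, BenBouMcK2021II}; you instead sketch the underlying mechanism, but the skeleton is identical.

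One small imprecision worth flagging: block-permutation symmetry alone does not force $M'_N(u,z)$ to be \emph{diagonal} --- a permutation-invariant block could still contain a rank-one all-ones piece. The argument the paper uses (and which closes this gap immediately) is that $\mc{S}'_N$ maps into diagonal matrices, so $M'_N(u,z) = -\bigl(z\Id - A'_N(u) + \mc{S}'_N[M'_N(u,z)]\bigr)^{-1}$ is the inverse of a diagonal matrix and hence diagonal; then permutation invariance within each block (or, more directly, reading the MDE entrywise) forces the two block-constant values $m_1, m_2$. With that observation inserted, your reduction and the rest of the argument go through as you describe.
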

\begin{proof}
First we prove the distance estimate \eqref{eqn:bsg_wasserstein}. The general result \cite[Proposition 3.1]{BenBouMcK2022}, with $\hat{\mu}_N = \mu_\infty(u)$ and $A = 2\kappa(u)$ from \eqref{eqn:munsupport}, reduces this problem to estimating the difference between the Stieltjes transforms, specifically to showing that for every $R > 0$, there exist $\epsilon_1, \epsilon_2 > 0$ such that
\[
	\sup_{u \in B_R(0)} \frac{1}{N-2} \int_{-6\kappa(u)}^{6\kappa(u)} \abs{\Tr(M_N(u,E+\ii N^{-\epsilon_1})) - \Tr(M'_N(u,E+\ii N^{-\epsilon_1}))} \diff E \leq N^{-\epsilon_2}.
\]
Estimates of exactly this type were established in \cite[Lemma 3.1]{BenBouMcK2021II}, assuming inputs which we verified for our model as \eqref{eqn:bsg_3.1.3.3}, \eqref{eqn:bsg_sNopnorm}, \eqref{eqn:bsg_snclosesn'}, and \eqref{eqn:bsg_3.1.3.6} in Lemma \ref{lem:bsg_flatness}.

The proof of the determinant estimate \eqref{eqn:bsg_sublinear} follows \cite[Lemma 4.4]{BenBouMcK2021II}, using \eqref{eqn:bsg_3.1.3.3}. The proof of the concentration estimate \eqref{eqn:bsg_NlogN} follows \cite[Lemma 4.5]{BenBouMcK2021II}. We reproduce these proofs here: Deterministically we have
\[
	\abs{\det(H_N(u))} \leq \|H_N(u)\|^N \leq (\|W_N\| + \|A_N(u)\|)^N \leq (2\|W_N\|)^N + (2\|A_N(u)\|)^N.
\]
The $A_N(u)$ term is handled with \eqref{eqn:bsg_3.1.3.3}. For the $W_N$ term, the proof of \cite[Corollary 1.10]{BenBouMcK2022} shows that $\P(\|W_N\|) \leq e^{-cN\max(0,t-C)}$ for some constants $c, C > 0$, which implies $\E[\|W_N\|^N] \leq e^{CN}$; this checks \eqref{eqn:bsg_sublinear}. To prove the concentration result \eqref{eqn:bsg_NlogN}, we notice that the laws of the entries of $\sqrt{N}H_N(u)$ satisfy the log-Sobolev inequality with a constant that is uniform over $u$ and over the choice of entry, since they are Gaussians, and since translations of measures that satisfy the log-Sobolev inequality, also satisfy log-Sobolev inequality with the same constant. So the result \cite[Theorem 1.5]{GuiZei2000} of Guionnet and Zeitouni gives
\[
	\sup_{u \in \R^3} \P(d_{\textup{BL}}(\hat{\mu}_{H_N(u)},\E[\hat{\mu}_{H_N(u)}]) > \epsilon) \leq \frac{C_1}{\epsilon^{3/2}} \exp(-C_2N^2\epsilon^5)
\]
for some constants $C_1, C_2 > 0$. The closeness of $\E[\hat{\mu}_{H_N(u)}]$ to $\mu_N(u)$ was established in the proof of \cite[Corollary 1.10]{BenBouMcK2022}; we showed the closeness of $\mu_N(u)$ to $\mu_\infty(u)$ in \eqref{eqn:bsg_wasserstein}. 
\end{proof}

\subsection{Proofs of complexity results for total critical points.}\

The main theorem, Theorem \ref{thm:bsg}, gives results both for total critical points and for local minima. In this subsection, we prove just the former results, which are simpler. In the next subsection, we establish the additional properties of $H_N(u)$ and $\mu_\infty(u)$ needed to prove the latter results.

The following lemma is a consequence of Kac--Rice arguments found in \cite{AufChe2014}, found specifically by combining their equation (27), Lemma 2, Lemma 3, equation (36) and equation (37), in a way that we will explain after the statement of the lemma. What Auffinger and Chen write is the special case of \eqref{eqn:bsgkacrice_generalB} with $B = (-\infty,t]$, counting local minima in sublevel sets, but the more general case here is an easy variation (made by removing the indicators restricting to local minima, and/or replacing $(-\infty,t]$ with $B$, everywhere in their results). We are mostly interested in the case $B = (-\infty,t]$ or $B = \R$.

\begin{lem}
\label{lem:bsgkacrice}
With the prefactor
\[
    f(N_1, N_2) = \frac{\frac{2(\pi N_1)^{N_1/2}}{\Gamma(N_1/2)} \cdot \frac{2(\pi N_2)^{N_2/2}}{\Gamma(N_2/2)} \cdot \left(\sqrt{\frac{N}{2\pi}}\right)^3}{ \left( (2\pi N)^{N-2} \left(\frac{\xi'_1}{N_1}\right)^{N_1-1} \left(\frac{\xi'_2}{N_2}\right)^{N_2-1} \right)^{1/2}},
\]
we have, for any Borel $B \subset \R$,
\begin{align}
	\E[\Crt_N^{\textup{tot}}(B)] &= f(N_1,N_2) \int_{H_B} e^{-N\frac{\|u\|^2}{2}} \E[\abs{\det(H_N(u))}] \diff u, \notag \\
	\E[\Crt_N^{\textup{min}}(B)] &= f(N_1, N_2) \int_{H_B} e^{-N\frac{\|u\|^2}{2}} \E[\abs{\det(H_N(u))} \mathbf{1}_{H_N(u) \geq 0}] \diff u. \label{eqn:bsgkacrice_generalB}
\end{align}
\end{lem}

Auffinger and Chen write this result in the form 
\[
	\E[\Crt_N^{\textup{tot}}(t)] = f(N_1,N_2) \frac{2\pi}{N} \int_{-\infty}^t \E[\abs{\det \nabla^2 \mc{H}_N(\mathbf{n})} | \mc{H}_N(\mathbf{n}) = Nx] e^{-N\frac{x^2}{2}} \diff x
\]
(see their eq. (36); although to streamline, we consider the variant that counts total critical points), where $\mathbf{n}$ is the special ``double north pole'' $\mathbf{n} = (0,\ldots,0,\sqrt{N_1},0,\ldots,0,\sqrt{N_2})$. Their Lemma 3 computes the distribution of the Hessian $\nabla^2 \mc{H}_N(\mathbf{n})$, conditioned on $\mc{H}_N(\mathbf{n})$, but some of their formulas are slightly incorrect as written: Some formulas $\E[AB]$ should in fact be $\Cov(A,B)$, and every $\xi'_1\xi'_2$ should in fact be $\xi''_{12}$, except for the penultimate one, which should be $\xi''_{12} - \xi'_1\xi'_2$. The corrected formulas from point 7 of their Lemma 3 are as follows, written for all $(i,i',j,j') \in I_1 \times I_1 \times I_2 \times I_2$ (their $H$ is our $\mc{H}_N$):
\begin{align*}
    \E[\nabla^2 H^{11}_{ii}(\mathbf{n}) | H(\mathbf{n}) = Nu_0] &= -\frac{N\xi'_1}{N_1}u_0, \\
    \E[\nabla^2 H^{22}_{jj}(\mathbf{n}) | H(\mathbf{n}) = Nu_0] &= -\frac{N\xi'_2}{N_2}u_0, \\
    \Var(\nabla^2 H^{11}_{ii'}(\mathbf{n}) | H(\mathbf{n}) = Nu_0] &= \frac{N}{N_1^2} \xi''_1 \quad \text{when} \quad i \neq i', \\
    \Var(\nabla^2 H^{22}_{jj'}(\mathbf{n}) | H(\mathbf{n}) = Nu_0] &= \frac{N}{N_2^2} \xi''_2 \quad \text{when} \quad j \neq j', \\
    \Var(\nabla^2 H^{12}_{ij}(\mathbf{n}) | H(\mathbf{n}) = Nu_0) &= \Var(\nabla^2 H^{21}_{ji}(\mathbf{n}) | H(\mathbf{n}) = Nu_0) = \frac{N}{N_1N_2} \xi''_{12}, \\
    \Cov(\nabla^2 H^{11}_{ii}(\mathbf{n}), \nabla^2 H^{11}_{i'i'}(\mathbf{n}) | H(\mathbf{n}) = Nu_0) &= \frac{N}{N_1^2}(\xi'_1 + (1+2\delta_{ii'})\xi''_1 - (\xi'_1)^2), \\
    \Cov(\nabla^2 H^{22}_{jj}(\mathbf{n}), \nabla^2 H^{22}_{j'j'}(\mathbf{n}) | H(\mathbf{n}) = Nu_0) &= \frac{N}{N_2^2}(\xi'_2 + (1+2\delta_{jj'})\xi''_2 - (\xi'_2)^2), \\
    \Cov(\nabla^2 H^{11}_{ii}(\mathbf{n}), \nabla^2 H^{22}_{jj}(\mathbf{n}) | H(\mathbf{n}) = Nu_0) &= \frac{N}{N_1N_2}(\xi''_{12} - \xi'_1\xi'_2), \\
    \Cov(\nabla^2 H^{11}_{ii}(\mathbf{n}), \nabla^2 H^{22}_{jj}(\mathbf{n}) | H(\mathbf{n}) = Nu_0) &= \frac{N}{N_1N_2}(\xi''_{12} - \xi'_1\xi'_2).
\end{align*}
Furthermore, in their eq. (34), the Gaussian corrections $Z_i$ should in general be correlated with each other (although still independent of everything else). More precisely, to compute the correlation, we see that if the conditioned Hessian has the form
\[
    T = W_N + \begin{pmatrix} \frac{\sqrt{N}}{N_1}\alpha_1 Z_1 \Id_{N_1} & 0 \\ 0 & \frac{\sqrt{N}}{N_2}\alpha_2 Z_2 \Id_{N_2} \end{pmatrix} - \begin{pmatrix} \frac{N\xi'_1}{N_1} u_0 \Id_{N_1} & 0 \\ 0 & \frac{N\xi'_2}{N_2} u_0 \Id_{N_2} \end{pmatrix}, \quad \begin{pmatrix} Z_1 \\ Z_2 \end{pmatrix} \sim \mc{N}\left(0, \begin{pmatrix} 1 & \eta \\ \eta & 1 \end{pmatrix} \right)
\]
for some $\eta$, where the Gaussians $Z_i$ are independent of everything else, then 
\[
    \frac{N}{N_1N_2}(\xi''_{12} - \xi'_1\xi'_2) = \Cov(T^{11}_{ii},T^{22}_{jj}) = \Cov\left(\frac{\sqrt{N}}{N_1}\alpha_1Z_1, \frac{\sqrt{N}}{N}\alpha_2Z_2\right) = \frac{N}{N_1N_2}\alpha_1\alpha_2\eta, 
\]
and thus we should take $\eta = \sigma$, where $\sigma$ is defined in \eqref{eqn:def_sigma}. (We normalize the GOE differently from Auffinger and Chen, but change factors elsewhere so that, in our $W_N = (\begin{smallmatrix} G_1 & G \\ G^T & G_2 \end{smallmatrix})$, our $G_i$ is ultimately the same as their $(\frac{N(N_i-1)}{N_i^2}2\xi''_i)^{1/2}M^{N_i-1}$ for $i = 1, 2$.) Lemma \ref{lem:sigma_def} ensures that $(\begin{smallmatrix} 1 & \sigma \\ \sigma & 1 \end{smallmatrix})$ is indeed a covariance matrix. We could track these correlations -- this would lead to some quadratic form $\ip{u,Au}$ in \eqref{eqn:s_bsg} rather than $\|u\|^2$ -- but we prefer to work with independent variables, so we write
\[
    \begin{pmatrix} Z_1 \\ Z_2 \end{pmatrix} \overset{d}{=} \begin{pmatrix} 1 & \sigma \\ \sigma & 1 \end{pmatrix}^{1/2} \begin{pmatrix} Y_1 \\ Y_2 \end{pmatrix} = \frac{1}{2} \begin{pmatrix} \sigma_+ & \sigma_- \\ \sigma_- & \sigma_+ \end{pmatrix} \begin{pmatrix} Y_1 \\ Y_2 \end{pmatrix}
\]
for independent standard Gaussian variables $Y_1, Y_2$. Ultimately we prefer our Gaussians to have variance $\frac{1}{N}$; with these corrections, conditioned on $\mc{H}_N(\mathbf{n}) = Nu_0$, the Hessian $\nabla^2 \mc{H}_N(\mathbf{n})$ has the same distribution as what we would call in our notation $W_N + A_N(u_0,U_1,U_2)$, where $U_1$ and $U_2$ are Gaussian variables of variance $\frac{1}{N}$, independent of each other and of everything else. In short, they write a one-dimensional integral consisting of matrices that have correlations along the diagonal, where the correlations are induced by two underlying random variables that are independent of everything else. We prefer to integrate over these variables separately, to get a three-dimensional integral consisting of matrices with no correlations, where $x$ corresponds to $u_0$ and $U_i$ corresponds to $u_i$ for $i = 1, 2$ (the Gaussian integrals cancel the extra $\frac{2\pi}{N}$ term). This is how we arrive at Lemma \ref{lem:bsgkacrice}.

\begin{proof}[Proof of Lemma \ref{lem:scontinuous} and \eqref{eqn:bsg_tot}]
Since
\begin{equation}
\label{eqn:fn1n2limit}
    \lim_{N \to \infty} \frac{1}{N} \log f(N_1,N_2) = \frac{1+\gamma\log\left(\frac{\gamma}{\xi'_1}\right) + (1-\gamma)\log\left(\frac{1-\gamma}{\xi'_2}\right)}{2},
\end{equation}
it remains only to understand the integrals appearing in Lemma \ref{lem:bsgkacrice}.

Specifically, to show \eqref{eqn:bsg_tot} it suffices to show
\begin{equation}
\label{eqn:abovethm4.1}
\begin{split}
    \lim_{N \to \infty} \frac{1}{N} \log \int_{H_t} e^{-N\frac{\|u\|^2}{2}} \E[\abs{\det(H_N(u))}] \diff u = \sup_{u \in H_t} \mc{S}_{\textup{bsg}}[u], \\
    \lim_{N \to \infty} \frac{1}{N} \log \int_{\R^3} e^{-N\frac{\|u\|^2}{2}} \E[\abs{\det(H_N(u))}] \diff u = \sup_{u \in \R^3} \mc{S}_{\textup{bsg}}[u].
\end{split}
\end{equation}
We wish to apply Theorem 4.1 in \cite{BenBouMcK2022}  with the choices $\alpha = 1/2$, $p = 2$ (recall $N = (N-2)+2)$ is two more than the size of $H_N(u)$), and $\mf{D} = \R^3$ or $\mf{D} = H_t$. This theorem has three hypotheses that we need to verify in our case. The first (``Assumptions locally uniform in $u$'') was checked in Lemma \ref{lem:bsg_checkingassnsK}; the second (``Limit measures'') was checked in \eqref{eqn:bsg_wasserstein}; the third (``Continuity and decay in $u$'') was checked in \eqref{eqn:bsg_sublinear}. In fact, these three hypotheses also imply Lemma \ref{lem:scontinuous}, as shown in \cite[Lemma 4.4]{BenBouMcK2022}.
\end{proof}

\subsection{Proofs of complexity results for local minima.}\

In this subsection, we establish some additional properties of $H_N(u)$ and $\mu_\infty(u)$ and finish the proof of Theorem \ref{thm:bsg}.

\begin{lem}
\label{lem:bsg_nooutliers}
For every $\epsilon > 0$ and $R > 0$, we have
\begin{equation}
\label{eqn:bsg_nooutliers}
    \lim_{N \to \infty} \inf_{u \in B_R(0)} \P(\Spec(H_N(u)) \subset [\ell(\mu_\infty(u))-\epsilon, r(\mu_\infty(u)) + \epsilon]) = 1
\end{equation}
and in fact the extreme eigenvalues of $H_N(u)$ converge in probability to the endpoints of $\mu_\infty(u)$.
\end{lem}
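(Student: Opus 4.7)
The plan is to apply the no-outlier theorem for MDE-structured random matrices from Alt-Erdős-Krüger-Nemish, and then transfer from the finite-$N$ measure $\mu_N(u)$ to the limit $\mu_\infty(u)$ using the stability estimates already established.

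First I would invoke \cite[Theorem 2.4, Remark 2.5(v)]{AltErdKruNem2019} applied to $H_N(u) = A_N(u) + W_N$, with respect to the finite-$N$ MDE measure $\mu_N(u)$ built from $(\mc{S}_N, A_N(u))$ as in Lemma \ref{lem:bsg_checkingassnsK}. All hypotheses (mean-field randomness, flatness, regularity) were checked there, and the constants involved depend only on $\sup_N \|\mc{S}_N\|$, $\sup_N \|A_N(u)\|$, and the flatness constant $\kappa$ of Lemma \ref{lem:bsg_flatness}, all of which are uniformly controlled over $u \in B_R(0)$ thanks to \eqref{eqn:bsg_ANbd} and \eqref{eqn:bsg_sNopnorm}. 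This gives, for some constant $C = C(R, \epsilon)$,
\[
    \sup_{u \in B_R(0)} \P\bigl(\Spec(H_N(u)) \not\subset [\mathtt{l}(\mu_N(u)) - \epsilon/2, \mathtt{r}(\mu_N(u)) + \epsilon/2]\bigr) \leq \frac{C}{N^{100}}.
\]

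Second I would show that the edges of $\mu_N(u)$ converge to those of $\mu_\infty(u)$ uniformly on $B_R(0)$. The two measures solve MDEs whose data differ only by $O(1/N)$: Lemma \ref{lem:bsg_flatness} gives $\|\mc{S}_N - \mc{S}'_N\| = O(1/N)$ and a direct computation shows $\|A_N(u) - A'_N(u)\| = O(\|u\|/N)$. Since both operators satisfy the flatness and self-adjointness properties needed to apply the support-regularity machinery of \cite{AjaErdKru2019} (as was done in the proofs of Lemmas \ref{lem:bsg_checkingassnsK} and \ref{lem:bsg_rhoinftyisgood}), the supports are themselves stable under such perturbations. Concretely, combining this stability with the Wasserstein bound \eqref{eqn:bsg_wasserstein} and the uniformly bounded Hölder densities guaranteed by Lemma \ref{lem:bsg_rhoinftyisgood} yields
\[
    \sup_{u \in B_R(0)} \max\bigl( |\mathtt{l}(\mu_N(u)) - \mathtt{l}(\mu_\infty(u))|, \, |\mathtt{r}(\mu_N(u)) - \mathtt{r}(\mu_\infty(u))| \bigr) \longrightarrow 0.
\]
Plugging this into the previous display, for $N$ large enough we can replace $\mu_N(u)$ by $\mu_\infty(u)$ up to an additional $\epsilon/2$ tolerance, which proves \eqref{eqn:bsg_nooutliers}.

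Finally, for the convergence in probability of $\lambda_{\min}(H_N(u))$ and $\lambda_{\max}(H_N(u))$ to the endpoints of $\mu_\infty(u)$, the upper bound on $\lambda_{\max}$ (and lower bound on $\lambda_{\min}$) is exactly \eqref{eqn:bsg_nooutliers}. The matching bound in the other direction follows from the weak convergence $\hat{\mu}_{H_N(u)} \to \mu_\infty(u)$, itself a consequence of \eqref{eqn:bsg_NlogN}: any open neighborhood of either edge of $\supp(\mu_\infty(u))$ carries positive mass, so must eventually contain an eigenvalue with probability tending to one.

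The main obstacle is step two: extracting quantitative edge convergence, rather than merely bulk (Wasserstein) convergence. Wasserstein closeness alone does not imply edge closeness, so one must use the MDE structure. The cleanest route is to exploit the fact that $\mu_N(u)$ and $\mu_\infty(u)$ are characterized as the self-consistent densities of states associated to $(\mc{S}_N, A_N(u))$ and $(\mc{S}'_N, A'_N(u))$, and to invoke the continuity of the MDE-support map with respect to the data pair $(\mc{S}, A)$ in the flat, self-adjoint regime established by Ajanki-Erdős-Krüger; here the $O(1/N)$ closeness of the data makes this continuity directly applicable uniformly over $u \in B_R(0)$.
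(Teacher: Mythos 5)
Your strategy is recognizably different from the paper's, and the difference is precisely where you flag the difficulty. You apply the no-outliers theorem of Alt \emph{et al.} to $H_N(u)$ itself, which gives localization around the finite-$N$ MDE measure $\mu_N(u)$, and then you must upgrade Wasserstein closeness of $\mu_N(u)$ and $\mu_\infty(u)$ to \emph{edge} closeness. You correctly identify this as the main obstacle, but the route you sketch (``continuity of the MDE-support map with respect to the data pair $(\mc{S},A)$'') is not a quotable statement from \cite{AjaErdKru2019}; one would have to extract it, e.g., by combining the stability of the MDE solution with the square-root edge behavior of the self-consistent density to convert bulk closeness into edge closeness. That is doable given the flatness you already checked, but it is additional work and needs to be done uniformly in $u \in B_R(0)$.

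The paper avoids this entirely by working at the matrix level rather than the measure level. The auxiliary matrices $H'_N(u) = A'_N(u) + W'_N$ were introduced precisely so that $\mu_\infty(u)$ is the \emph{exact} MDE measure for $H'_N(u)$ (not a limit of one); the local law of \cite{AltErdKruNem2019} then localizes $\Spec(H'_N(u))$ directly in an $\epsilon$-neighborhood of $\supp(\mu_\infty(u))$, with no edge-continuity argument needed. The transfer from $H'_N$ to $H_N$ is done by Lemma~\ref{lem:bsg_hn_vs_hprimen}, which gives $\|H_N(u) - H'_N(u)\| \le \epsilon$ with probability $1 - \OO(e^{-N^{0.49}})$ uniformly on $B_R(0)$, and Weyl's inequality then moves each eigenvalue by at most $\epsilon$. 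Your final step (using \eqref{eqn:bsg_NlogN} and positive mass near the edges to pin the extreme eigenvalues to the edges from inside) is the same as the paper's. In short: your plan is viable but requires a nontrivial MDE-edge-stability lemma that you have not supplied; the paper's choice of the primed matrices is exactly the device that makes such a lemma unnecessary, replacing a measure comparison by an operator-norm comparison.
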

\begin{proof}
The local law of Alt \emph{et al.} \cite{AltErdKruNem2019} is written with respect to a Dyson equation over $\C^{N-2}$. We already showed, in the proof of Lemma \ref{lem:bsg_checkingassnsK}, that the measures induced by this Dyson equation for the matrices $H_N(u)$ are actually $\mu_N(u)$. In the same way, one can show that the measures induced by this Dyson equation for the matrices $H'_N(u)$ are actually $\mu_\infty(u)$; this is why we introduced those matrices. The rest of the argument is exactly as in the proof of \cite[Lemma 4.6]{BenBouMcK2021II}: it uses the local law \cite{AltErdKruNem2019} to localize the spectrum of $H'_N(u)$ near the support of $\mu_\infty(u)$, then Lemma \ref{lem:bsg_hn_vs_hprimen} to relate $H_N(u)$ to $H'_N(u)$, and finally \eqref{eqn:bsg_NlogN} to show that the extreme eigenvalues of $H_N(u)$ do not push inside the support of $\mu_\infty(u)$.
\end{proof}

\begin{lem}
\label{lem:bsg_topology}
With
\[
	\mc{G}_{+\epsilon} = \{u \in \R^3 : \ell(\mu_\infty(u)) \geq 2\epsilon\}
\]
(this is the same definition as \cite[(4-5)]{BenBouMcK2022}) and $\mc{G}$ as defined in \eqref{eqn:bsg_defG}, we have that each $\mc{G}_{+\epsilon}$ is convex, that $\mc{G}$ is convex and closed, that $\mc{G}_{+1} \cap H_t$ has positive measure for every $t$, and that 
\[
    \overline{\bigcup_{\epsilon > 0} \mc{G}_{+\epsilon}} = \mc{G} \quad \text{and} \quad \overline{H_t \cap \left( \bigcup_{\epsilon > 0} \mc{G}_{+\epsilon} \right)} = H_t \cap \mc{G} \neq \emptyset \quad \text{ for all $t$}.
\]
\end{lem}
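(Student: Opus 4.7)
The plan is to recognize $\mc{G}_{+\epsilon}$ as the superlevel set $\{u \in \R^3 : \mathtt{l}(\mu_\infty(u)) \geq \epsilon\}$ and reduce every assertion of the lemma to two facts about $\Lambda(u) := \mathtt{l}(\mu_\infty(u))$: (i) $\Lambda$ is concave (hence continuous) on $\R^3$, and (ii) decreasing the first coordinate $u_0$ strictly increases $\Lambda$ in a quantitative way. Once (i) and (ii) are in hand, convexity of $\mc{G}_{+\epsilon}$ is automatic as a superlevel set of a concave function, and every closure/approximation claim becomes a one-parameter perturbation argument.

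For (i), the input is that $A'_N(u)$ is linear in $u$ by inspection, so $H'_N(u) = A'_N(u) + W'_N$ is affine-linear in $u$ for each realization of $W'_N$. The smallest eigenvalue is concave on Hermitian matrices (Weyl), giving, for $w = \lambda u + (1-\lambda)v$,
\[
    \lambda_{\min{}}(H'_N(w)) \geq \lambda\,\lambda_{\min{}}(H'_N(u)) + (1-\lambda)\,\lambda_{\min{}}(H'_N(v)).
\]
By Lemmas \ref{lem:bsg_hn_vs_hprimen} and \ref{lem:bsg_nooutliers}, $\lambda_{\min{}}(H'_N(u)) \to \Lambda(u)$ in probability for every $u$; taking $N\to\infty$ on a single high-probability event for all three matrices passes the concavity inequality to the limit. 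This yields concavity of $\Lambda$, hence convexity and closedness of $\mc{G}_{+\epsilon} = \{\Lambda \geq \epsilon\}$ and $\mc{G} = \{\Lambda \geq 0\}$.

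For (ii), replacing $u$ by $u - \delta e_1$ adds $\delta D$ to $A'_N(u)$, where $D = \diag(\frac{\xi'_1}{\gamma}\Id,\,\frac{\xi'_2}{1-\gamma}\Id)$ is positive definite with smallest eigenvalue $c_0 := \min(\xi'_1/\gamma,\,\xi'_2/(1-\gamma)) > 0$ (using the nondegeneracy $\xi'_1, \xi'_2 > 0$). Weyl then gives $\lambda_{\min{}}(H'_N(u - \delta e_1)) \geq \lambda_{\min{}}(H'_N(u)) + \delta c_0$, which passes to the limit as $\Lambda(u - \delta e_1) \geq \Lambda(u) + \delta c_0$. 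From (ii) the two closure identities follow cleanly: the inclusions $\overline{\bigcup_\epsilon \mc{G}_{+\epsilon}} \subseteq \mc{G}$ and $\overline{H_t \cap \bigcup_\epsilon \mc{G}_{+\epsilon}} \subseteq H_t \cap \mc{G}$ are immediate from $\mc{G}_{+\epsilon} \subseteq \mc{G}$ plus closedness; for the reverse inclusions, given $u \in \mc{G}$ take $u_n = u - \tfrac{1}{n}e_1$, so that $\Lambda(u_n) \geq c_0/n > 0$, i.e. $u_n \in \mc{G}_{+c_0/n}$, and $u_n \to u$. If further $u \in H_t$, then the first coordinate of $u_n$ satisfies $u_0 - 1/n < u_0 \leq t$, so $u_n$ remains in $H_t$.

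For the remaining claim that $\mc{G}_{+1}$ has positive measure, and (parenthetically in Theorem \ref{thm:bsg}) that $H_t \cap \mc{G}$ is nonempty for every $t$, iterate (ii): $\Lambda(-Me_1) \geq \Lambda(0) + M c_0 \to \infty$ as $M \to \infty$. Choosing $M$ with $\Lambda(-Me_1) > 1$, continuity of $\Lambda$ yields an open neighborhood inside $\mc{G}_{+1}$, giving positive Lebesgue measure. Likewise, choosing $M$ with $M \geq -t$ and $\Lambda(-Me_1) \geq 0$ produces a point in $H_t \cap \mc{G}$. The only subtle step I anticipate is justifying the limit in the concavity argument, since the convergence $\lambda_{\min{}}(H'_N(u)) \to \Lambda(u)$ is in probability rather than almost sure; but using two-sided convergence at the three points $u, v, w$ simultaneously on a single high-probability event (which exists since the bad events have probabilities tending to zero by Lemma \ref{lem:bsg_nooutliers}) lets one pass arbitrary-small-slack inequalities to the deterministic limit, which is enough.
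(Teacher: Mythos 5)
Your proof is correct and follows the same overall strategy as the paper: a one-parameter perturbation $u\mapsto u-\delta e_1$ that pushes $\mathtt{l}(\mu_\infty(u))$ up by a definite amount, combined with the convergence-in-probability of $\lambda_{\min}(H_N(u))$ to $\mathtt{l}(\mu_\infty(u))$ from Lemma \ref{lem:bsg_nooutliers}. The paper's quantitative perturbation bound $A_N(u-\delta e_1)\geq A_N(u)+\frac{\kappa_{\textup{bsg}}}{4}\delta\,\Id$ is exactly your step (ii) (with a slightly different constant, since you work with $A'_N$ instead of $A_N$); both are valid since $\xi'_1,\xi'_2>0$ unconditionally.

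Where you diverge is in the details, and in a way that makes the argument more self-contained. For convexity of $\mc{G}_{+\epsilon}$ the paper simply refers to \cite[Lemma 4.7]{BenBouMcK2021II}, whereas you derive it directly from the concavity of $\Lambda(u)=\mathtt{l}(\mu_\infty(u))$, which you obtain by combining the affine dependence of $H'_N(u)$ on $u$, concavity of $\lambda_{\min}$ on Hermitian matrices, and a standard passage to the limit in probability. This also gives you closedness of $\mc{G}$ for free as a superlevel set of a continuous concave function, rather than citing \cite[Lemma 4.6]{BenBouMcK2021I}. For the closure identity intersected with $H_t$, the paper appeals to a point-set-topology argument using convexity of $\cup_\epsilon\mc{G}_{+\epsilon}$ and non-empty interior; your observation that the perturbing sequence $u_n=u-\tfrac{1}{n}e_1$ automatically stays in $H_t$ because it decreases the first coordinate is cleaner and sidesteps that argument entirely. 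The one place where you rely on an unstated premise is the identification $\mc{G}_{+\epsilon}=\{u:\mathtt{l}(\mu_\infty(u))\geq\epsilon\}$; the paper leaves the definition to \cite[(4.5)]{BenBouMcK2021I}, but your reading is consistent with the paper's definition \eqref{eqn:bsg_defG} of $\mc{G}$ and its use of $\mc{G}_{-\delta}$ in the proof of Corollary \ref{cor:einfty}, and the argument works verbatim with either $\geq\epsilon$ or $>\epsilon$.
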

\begin{proof}
Convexity for $\mc{G}_{+\epsilon}$ (and for $\mc{G}$, which is the same as $\mc{G}_{+\epsilon}$ with $\epsilon = 0$) is proved exactly as in \cite[Lemma 4.7]{BenBouMcK2021II}. 

For simplicity, we restrict ourselves to $u$ in the quarter space
\[
    \mc{Q} = \{(u_0,u_1,u_2) : u_1 \geq 0, u_2 \geq 0\}.
\]
For $u \in \mc{Q}$ we have
\begin{align*}
    \lambda_{\min{}}(A_N(u)) &= \min\left\{\frac{N}{N_1}\left(\frac{\alpha_1\sigma_+}{2}u_1 + \frac{\alpha_1\sigma_-}{2}u_2 - \xi'_1 u_0\right), \frac{N}{N_2}\left(\frac{\alpha_2\sigma_-}{2}u_1 + \frac{\alpha_2\sigma_+}{2} u_2 - \xi'_2 u_0\right) \right\} \\
    &\geq u_0 \min\left\{-\frac{2\xi'_1}{\gamma}, -\frac{2\xi'_2}{1-\gamma}\right\} \eqdef -\kappa_{\textup{bsg}} u_0.
\end{align*}
Combining this with \eqref{eqn:bsg_wnleftedge}, we find
\[
    \liminf_{N \to \infty} \lambda_{\min{}}(H_N(u)) \geq -\kappa_{\textup{bsg}} u_0 - 2\sqrt{\sup_N \|\mc{S}_N\|}-1
\]
for $u \in \mc{Q}$. Since we know from Lemma \ref{lem:bsg_nooutliers} that $\lambda_{\min{}}(H_N(u))$ tends to $\ell(\mu_\infty(u))$ in probability, we find that $\ell(\mu_\infty(u)) \geq -\kappa_{\textup{bsg}}u_0 - 2\sqrt{\sup_N \|\mc{S}_N\|} - 1$ for all $u \in \mc{Q}$. Hence
\begin{equation}
\label{eqn:subsetofg}
	 \left\{(u_0,u_1,u_2) \in \R^3 : u_0 \leq \min\left(t,-\frac{2\epsilon+2\sqrt{\sup_N \|\mc{S}_N\|}+1}{\kappa_{\textup{bsg}}}\right), u_1 \geq 0, u_2 \geq 0\right\} \subset (\mc{G}_{+\epsilon} \cap H_t) \subset \mc{G}
\end{equation}
for all $\epsilon$ and all $t$, which shows that $\mc{G}_{+1} \cap H_t$ has positive measure for all $t$, and that $H_t \cap \mc{G} \neq \emptyset$ for all $t$, since $\mc{G}$ is the same as $\mc{G}_{+\epsilon}$ with $\epsilon = 0$.

Finally, we note that the inclusion $\cup_{\epsilon > 0} \mc{G}_{+\epsilon} \subset \mc{G}$ is clear, and that $\mc{G}$ is closed by \cite[Lemma 4.6]{BenBouMcK2022}. (Applying this lemma requires (a subset of) the assumptions of Theorem 4.1 in \cite{BenBouMcK2022}, but we checked all the assumptions of Theorem 4.1 in the paragraph just below \eqref{eqn:abovethm4.1}.)

To show the reverse inclusion, write $e_1 = (1,0,0)$; then for $\delta > 0$ we have $A_N(u-\delta e_1) \geq A_N(u) + \kappa'_{\textup{bsg}}\delta\Id$, with $\kappa'_{\textup{bsg}} = \min\{2\xi'_1/\gamma,2\xi'_2/(1-\gamma)\}$, so that by the convergence in probability of Lemma \ref{lem:bsg_nooutliers} we have $\ell(\mu_\infty(u-\delta e_1)) \geq \ell(\mu_\infty(u)) + \kappa'_{\textup{bsg}}\delta$. This completes the proof of the equality $\overline{\cup_\epsilon \mc{G}_{+\epsilon}} = \mc{G}$. The version intersected with $H_t$ is an exercise in point-set topology, since $\cup_\epsilon \mc{G}_{+\epsilon}$ is convex as a union of nested convex sets, $H_t$ is a half-space, and their intersection has non-empty interior by the arguments above.
\end{proof}

\begin{proof}[Proof of Theorem \ref{thm:bsg}]
It remains only to show the results on local minima. Applying Lemma \ref{lem:bsgkacrice} and \eqref{eqn:fn1n2limit}, we see that to show \eqref{eqn:bsg_min} it suffices to show
\begin{align*}
    \lim_{N \to \infty} \frac{1}{N} \log \int_{H_t} e^{-N\frac{\|u\|^2}{2}} \E[\abs{\det(H_N(u))} \mathbf{1}_{H_N(u)} \geq 0] \diff u &= \sup_{u \in \mc{G} \cap \mc{H}_t} \mc{S}_{\textup{bsg}}[u], \\
    \lim_{N \to \infty} \frac{1}{N} \log \int_{\R^3} e^{-N\frac{\|u\|^2}{2}} \E[\abs{\det(H_N(u))} \mathbf{1}_{H_N(u)} \geq 0] \diff u &= \sup_{u \in \mc{G}} \mc{S}_{\textup{bsg}}[u].
\end{align*}
Here we will apply Theorem 4.5 in \cite{BenBouMcK2022}, with the same choices of parameters as in Theorem 4.1. This theorem requires three hypotheses beyond those of Theorem 4.1: The first (``Superexponential concentration'') was checked in \eqref{eqn:bsg_NlogN}; the second (``No outliers'') was checked in \eqref{eqn:bsg_nooutliers}; the third (``Topology'') was checked in Lemma \ref{lem:bsg_topology}.
\end{proof}

\subsection{Proofs of corollaries about pure models.}\

\begin{proof}[Proof of Corollary \ref{cor:einfty}]
Directly from the MDE \eqref{eqn:bsg_scalars}, we obtain the symmetry
\[
    \mu_\infty(-u,\lambda) = \mu_\infty(u,-\lambda).
\]
In particular, $\mu_\infty(0,\lambda)$ is an even function of $\lambda$, so its left edge is strictly negative, hence $u = 0$ is not an element of $\mc{G}_{\textup{pure}}$. Since $\mc{G}_{\textup{pure}}$ has the form $(-\infty,-E_\infty(p,q,\gamma)]$ we conclude $-E_\infty(p,q,\gamma) < 0$. Since $\Sigma^{\textup{min}}(t) = \text{constant} + \sup_{u \in \mc{G}_{\textup{pure}} \cap (-\infty,t]} \mc{S}_{\textup{bsg}}[u]$ from \eqref{eqn:bsg_min}, we conclude that $\Sigma^{\textup{min}}(t)$ stabilizes at $t = -E_\infty(p,q,\gamma)$.

For each $\epsilon$, consider the half-space
\[
    \widetilde{H}_\epsilon = \{(u_0,u_1,u_2) \in \R^3 : u_0 \geq -E_\infty(p,q,\gamma)+\epsilon\}.
\]
By Markov's inequality and Lemma \ref{lem:bsgkacrice}, we have
\begin{align*}
    \P(\Crt^{\textup{min}}_N((-E_\infty(p,q,\gamma)+\epsilon,\infty)) \geq 1) &\leq \E[\Crt^{\textup{min}}_N((-E_\infty(p,q,\gamma)+\epsilon,\infty))]] \\
    &= f(N_1,N_2) \int_{\widetilde{H}_\epsilon} e^{-N\frac{\|u\|^2}{2}} \E[\abs{\det(H_N(u))} \mathbf{1}_{H_N(u) \geq 0}] \diff u.
\end{align*}
In the companion paper \cite[(4-5)]{BenBouMcK2022} we considered a sequence of nested sets $\mc{G}_{-\delta}$ defined by
\[
    \mc{G}_{-\delta} = \{u \in \R^3 : \mu_\infty(u)((-\infty,-\delta)) \leq \delta\}.
\]
Since $\mu_\infty(u)$ depends only on $u_0$ in the pure case we are currently considering, each $\mc{G}_{-\delta}$ is of the form $\mc{G}_{-\delta} = \{u_0 \times \R^2 : u_0 \in \mc{G}_{\textup{pure}, -\delta}\}$ for some set $\mc{G}_{\textup{pure}, -\delta} \subset \R$. In fact, we claim that $\mc{G}_{\textup{pure},-\delta}$ is an interval of the form
\begin{equation}
\label{eqn:gminusdelta_convex}
    \mc{G}_{\textup{pure}, -\delta} = (-\infty,f(\delta)].
\end{equation}
Assume this claim momentarily. From the definitions one can see that $\cap_{\delta > 0} \mc{G}_{-\delta} = \mc{G}$, and thus \eqref{eqn:gpure} tells us that $\lim_{\delta \downarrow 0} f(\delta) = -E_\infty(p,q,\gamma)$. Hence there exists a small $\delta = \delta(\epsilon) > 0$ with $f(\delta) < -E_\infty(p,q,\gamma) + \epsilon$. For this $\delta$ we therefore have $\widetilde{H}_{\epsilon} \subset (\mc{G}_{-\delta})^c$, but we showed in \cite[Lemma 4.7]{BenBouMcK2022} that
\[
    \lim_{N \to \infty} \frac{1}{N}\log \int_{(\mc{G}_{-\delta})^c} e^{-N\frac{\|u\|^2}{2}} \E[\abs{\det(H_N(u))} \mathbf{1}_{H_N(u) \geq 0}] \diff u = -\infty
\]
for every $\delta > 0$. This completes the proof, modulo \eqref{eqn:gminusdelta_convex}.

Now we prove \eqref{eqn:gminusdelta_convex}. Since $\mu_\infty((u_0,u_1,u_2))$ depends on $u_0$ only, we abuse notation and write $\mu_\infty(u_0)$. Notice that $\mu_\infty(u_0)$ is the limiting empirical measure of the random matrix $W_N + u_0B_N$, where 
\[
    B_N = A_N(1,0,0) = -\left(\begin{smallmatrix} \frac{N\xi'_1}{N_1} & 0 \\ 0 & \frac{N\xi'_2}{N_2} \end{smallmatrix}\right)
\]
has strictly negative eigenvalues. The Courant--Fischer variational characterization of eigenvalues gives that, for each $i \in \llbracket 1, N \rrbracket$, the $i$th eigenvalue of $W_N + u_0B_N$ is a non-increasing function of $u_0$. Thus
\[
    \frac{1}{N}\#\{i : \lambda_i(W_N + u_0B_N) < -\delta\}
\]
is almost surely non-decreasing in $u_0$, hence its $N\to+\infty$ limit $\mu_\infty(u_0)((-\infty,-\delta))$ is also non-decreasing in $u_0$. This shows that $\mc{G}_{-\delta}$ is a single interval containing arbitrarily large negative values. From its definition and continuity of the map $u \mapsto \mu_\infty(u)$ (see the proof of \cite[Lemma 4.6]{BenBouMcK2022}, which uses \eqref{eqn:bsg_wasserstein}) one can see that it is closed, which completes the proof of \eqref{eqn:gminusdelta_convex}.
\end{proof}

\begin{proof}[Proof of Corollary \ref{cor:explicit}]
Since $u_1$ and $u_2$ play no role, we drop them from the notation. The scalar problem \eqref{eqn:bsg_scalars} is solved by $m_1(u_0,z) = m_2(u_0,z) = m(u_0,z)$, which satisfies a quadratic equation given by \eqref{eqn:bsg_scalars}; this yields
\[
    \mu_\infty(u_0, \diff\lambda) = \frac{\sqrt{(4(p+q-1)(p+q) - (\lambda+(p+q)u_0)^2)_+}}{2\pi(p+q-1)(p+q)} \diff \lambda.
\]
Since the left edge of this measure is explicit, $E_\infty(p,q,\frac{p}{p+q})$ can be computed directly. After changing variables we obtain
\[
    \mc{S}_{\textup{bsg}}[(u_0,0,0)] = \log\left(\sqrt{(p+q-1)(p+q)}\right) + \Omega\left(u_0\sqrt{\frac{p+q}{p+q-1}}\right)-\frac{(u_0)^2}{2}
\]
which is even, strictly concave, and uniquely maximized at zero, since $u \mapsto \Omega(su)-\frac{u^2}{2}$ has this property for $\abs{s} < \sqrt{2}$, and here $\sqrt{\frac{p+q}{p+q-1}} < \sqrt{2}$. This allows us to solve the variational problem.
\end{proof}

\begin{proof}[Proof of Corollary \ref{cor:groundstate}]
To locate the ground state, note that if $\min_{u,v} \mc{H}_{N,p,q}(u,v) \leq N(-E_0(p+q)-\epsilon)$, then $\Crt_N^{\text{tot}}(-E_0(p+q)-\epsilon) \geq 1$; then apply Markov's inequality. To estimate $-E_0(p+q)$, use the crude bounds 
\[
	0 \leq \Omega\left(t\sqrt{\frac{p+q}{p+q-1}}\right) \leq -t\sqrt{\frac{p+q}{p+q-1}},
\]
valid for all $t \leq -\sqrt{2}$, to get upper and lower bounds for $\Sigma_{p+q}(t)$ and hence for $-E_0(p+q)$.
\end{proof}

\begin{rem}
\label{rem:pure1q}
We remark briefly on the restriction $\xi''_1 > 0$ and $\xi''_2 > 0$, which is equivalent to ``neither a linear combination of pure $(1,q)$ spins for different $q$ values, nor a linear combination of pure $(p,1)$ spins for different $p$ values.'' In order to apply our Laplace-method arguments, we need the measures $\mu_\infty(u)$ to admit densities. We know of two strategies to show that a measure induced by the Dyson equation admits a density: Either check that the self-energy operator $\mc{S}$ in the Dyson equation is flat and import results of \cite{AjaErdKru2019} (which is our strategy here), or ``by hand,'' meaning manipulate the Dyson equation in a clever way to show that the solution $M_N(u,z)$ satisfies $\sup_{N,u,z} \|M_N(u,z)\| < \infty$ (which is our strategy for the ``elastic-manifold'' model in \cite{BenBouMcK2021II}). 

If $\min(\xi''_1, \xi''_2) = 0$, the self-energy operators $\mc{S}_N$ and $\mc{S}'_N$ are not flat: For example, if $\xi''_1 = 0$, then
\[
    \mc{S}_N\left[ \begin{pmatrix} T_{11} & 0 \\ 0 & 0 \end{pmatrix} \right] = \begin{pmatrix} 0 & 0 \\ 0 & \frac{N\xi''_{12}}{N_1N_2} \Tr(T_{11}) \Id + \frac{N\xi''_2}{N_2^2} \diag(T_{11}) \end{pmatrix} \not\geq \frac{1}{\kappa(N-2)}\Tr(T).
\]
The missing piece is thus to establish regularity ``by hand,'' which we do not know how to do for this model.
\end{rem}

\addcontentsline{toc}{section}{References}
\bibliographystyle{alpha-abbrvsort}
\bibliography{complexitybib}

\newcommand{\etalchar}[1]{$^{#1}$}
\begin{thebibliography}{BABM22}

\bibitem[ABB{\etalchar{+}}13]{AglBarBarGalGueMoa2013}
Elena Agliari, Adriano Barra, Silvia Bartolucci, Andrea Galluzzi, Francesco
  Guerra, and Francesco Moauro.
\newblock Parallel processing in immune networks.
\newblock {\em Phys. Rev. E}, 87:042701, Apr 2013.

\bibitem[ABG{\etalchar{+}}12]{AglBarGalGueMoa2012}
Elena Agliari, Adriano Barra, Andrea Galluzzi, Francesco Guerra, and Francesco
  Moauro.
\newblock Multitasking associative networks.
\newblock {\em Phys. Rev. Lett.}, 109:268101, Dec 2012.

\bibitem[AEK19]{AjaErdKru2019}
Oskari~H. Ajanki, L\'{a}szl\'{o} Erd\H{o}s, and Torben Kr\"{u}ger.
\newblock Stability of the matrix {D}yson equation and random matrices with
  correlations.
\newblock {\em Probab. Theory Related Fields}, 173(1-2):293--373, 2019.

\bibitem[AEK20]{AltErdKru2020}
Johannes Alt, L\'{a}szl\'{o} Erd\H{o}s, and Torben Kr\"{u}ger.
\newblock The {D}yson equation with linear self-energy: spectral bands, edges
  and cusps.
\newblock {\em Doc. Math.}, 25:1421--1539, 2020.

\bibitem[AEKN19]{AltErdKruNem2019}
Johannes Alt, L\'{a}szl\'{o} Erd\H{o}s, Torben Kr\"{u}ger, and Yuriy Nemish.
\newblock Location of the spectrum of {K}ronecker random matrices.
\newblock {\em Ann. Inst. Henri Poincar\'{e} Probab. Stat.}, 55(2):661--696,
  2019.

\bibitem[ABA13]{AufBen2013}
Antonio Auffinger and Gerard Ben~Arous.
\newblock Complexity of random smooth functions on the high-dimensional sphere.
\newblock {\em Ann. Probab.}, 41(6):4214--4247, 2013.

\bibitem[ABA{\v{C}}13]{AufBenCer2013}
Antonio Auffinger, G\'{e}rard Ben~Arous, and Ji\v{r}\'{\i} {\v{C}}ern\'{y}.
\newblock Random matrices and complexity of spin glasses.
\newblock {\em Comm. Pure Appl. Math.}, 66(2):165--201, 2013.

\bibitem[AC14]{AufChe2014}
Antonio Auffinger and Wei-Kuo Chen.
\newblock Free energy and complexity of spherical bipartite models.
\newblock {\em J. Stat. Phys.}, 157(1):40--59, 2014.

\bibitem[BL20]{BaiLee2020}
Jinho Baik and Ji~Oon Lee.
\newblock Free energy of bipartite spherical {S}herrington-{K}irkpatrick model.
\newblock {\em Ann. Inst. Henri Poincar\'{e} Probab. Stat.}, 56(4):2897--2934,
  2020.

\bibitem[BCMT15]{BarConMinTan2015}
Adriano Barra, Pierluigi Contucci, Emanuele Mingione, and Daniele Tantari.
\newblock Multi-species mean field spin glasses. {R}igorous results.
\newblock {\em Ann. Henri Poincar\'{e}}, 16(3):691--708, 2015.

\bibitem[BGG{\etalchar{+}}14]{BarGalGuePizTan2014}
Adriano Barra, Andrea Galluzzi, Francesco Guerra, Andrea Pizzoferrato, and
  Daniele Tantari.
\newblock Mean field bipartite spin models treated with mechanical techniques.
\newblock {\em Eur. Phys. J. B}, 87(3):Art. 74, 13, 2014.

\bibitem[BGG10]{BarGenGue2010}
Adriano Barra, Giuseppe Genovese, and Francesco Guerra.
\newblock The replica symmetric approximation of the analogical neural network.
\newblock {\em J. Stat. Phys.}, 140(4):784--796, 2010.

\bibitem[BGG11]{BarGenGue2011}
Adriano Barra, Giuseppe Genovese, and Francesco Guerra.
\newblock Equilibrium statistical mechanics of bipartite spin systems.
\newblock {\em J. Phys. A}, 44(24):245002, 22, 2011.

\bibitem[BGST18]{BarGenSolTan2018}
Adriano Barra, Giuseppe Genovese, Peter Sollich, and Daniele Tantari.
\newblock Phase diagram of restricted boltzmann machines and generalized
  hopfield networks with arbitrary priors.
\newblock {\em Phys. Rev. E}, 97:022310, Feb 2018.

\bibitem[B{\v{C}}NS22]{BelCerNakSch2022}
David Belius, Ji\v{r}\'{\i} {\v{C}}ern\'{y}, Shuta Nakajima, and Marius~A.
  Schmidt.
\newblock Triviality of the geometry of mixed {$p$}-spin spherical
  {H}amiltonians with external field.
\newblock {\em J. Stat. Phys.}, 186(1):Paper No. 12, 34, 2022.

\bibitem[BABM21]{BenBouMcK2021II}
G{\'{e}}rard Ben~Arous, Paul Bourgade, and Benjamin McKenna.
\newblock Landscape complexity beyond invariance and the elastic manifold,
  2021.
\newblock arXiv:2105.05051v1. To appear in \emph{Communications on Pure and
  Applied Mathematics}.

\bibitem[BABM22]{BenBouMcK2022}
G\'{e}rard Ben~Arous, Paul Bourgade, and Benjamin McKenna.
\newblock Exponential growth of random determinants beyond invariance.
\newblock {\em Probab. Math. Phys.}, 3(4):731--789, 2022.

\bibitem[Erd19]{Erd2019}
L\'{a}szl\'{o} Erd\H{o}s.
\newblock The matrix {D}yson equation and its applications for random matrices.
\newblock In {\em Random matrices}, volume~26 of {\em IAS/Park City Math.
  Ser.}, pages 75--158. Amer. Math. Soc., Providence, RI, 2019.

\bibitem[EKS19]{ErdKruSch2019}
L\'{a}szl\'{o} Erd\H{o}s, Torben Kr\"{u}ger, and Dominik Schr\"{o}der.
\newblock Random matrices with slow correlation decay.
\newblock {\em Forum Math. Sigma}, 7:e8, 89, 2019.

\bibitem[Fyo04]{Fyo2004}
Yan~V. Fyodorov.
\newblock Complexity of random energy landscapes, glass transition, and
  absolute value of the spectral determinant of random matrices.
\newblock {\em Phys. Rev. Lett.}, 92(24):240601, 4, 2004.

\bibitem[FKS87a]{FyoKorShe1987A}
Yan~V. Fyodorov, I.~Ya. Korenblit, and E.~F. Shender.
\newblock Antiferromagnetic ising spin glass.
\newblock {\em J. Phys. C}, 20(12):1835--1839, Apr 1987.

\bibitem[FKS87b]{FyoKorShe1987B}
Yan~V. Fyodorov, I.~Ya. Korenblit, and E.~F. Shender.
\newblock Phase transitions in frustrated metamagnets.
\newblock {\em Europhysics Letters ({EPL})}, 4(7):827--832, Oct 1987.

\bibitem[GZ00]{GuiZei2000}
Alice Guionnet and Ofer Zeitouni.
\newblock Concentration of the spectral measure for large matrices.
\newblock {\em Electron. Comm. Probab.}, 5:119--136, 2000.

\bibitem[HPG18]{HarParGei2018}
Gavin~S. Hartnett, Edward Parker, and Edward Geist.
\newblock Replica symmetry breaking in bipartite spin glasses and neural
  networks.
\newblock {\em Phys. Rev. E}, 98:022116, Aug 2018.

\bibitem[HFS07]{HelFarSpe2007}
J.~William Helton, Reza~Rashidi Far, and Roland Speicher.
\newblock Operator-valued semicircular elements: Solving a quadratic matrix
  equation with positivity constraints.
\newblock {\em International Mathematics Research Notices}, 2007, 01 2007.
\newblock rnm086.

\bibitem[KC75]{KinCoh1975}
John~M. Kincaid and Ezechiel G.~D. Cohen.
\newblock Phase diagrams of liquid helium mixtures and metamagnets: Experiment
  and mean field theory.
\newblock {\em Phys. Lett. C}, 22(2):57--143, 1975.

\bibitem[Kiv21]{Kiv2021}
Pax Kivimae.
\newblock The ground state energy and concentration of complexity in spherical
  bipartite models, 2021.
\newblock arXiv:2107.13138v1.

\bibitem[KS85]{KorShe1985}
I.~Ya. Korenblit and E.~F. Shender.
\newblock Spin glass in an {Ising} two-sublattice magnet.
\newblock {\em Zh. Eksp. Teor. Fiz.}, 89:1785---1795, 1985.

\bibitem[Mou21]{Mou2021}
Jean-Christophe Mourrat.
\newblock Nonconvex interactions in mean-field spin glasses.
\newblock {\em Probab. Math. Phys.}, 2(2):61--119, 2021.

\bibitem[Pan15]{Pan2015}
Dmitry Panchenko.
\newblock The free energy in a multi-species {S}herrington-{K}irkpatrick model.
\newblock {\em Ann. Probab.}, 43(6):3494--3513, 2015.

\end{thebibliography}

\end{document}